\newif\ifarxiv
\arxivtrue  

\ifarxiv
  \documentclass[12pt]{article}
\else
  \documentclass{article}
\fi

\usepackage{csvsimple}
\usepackage{natbib}
\ifarxiv
\setcitestyle{authoryear,round,citesep={;},aysep={,},yysep={;}}
\renewcommand{\cite}[1]{\citep{#1}}
\fi
\ifarxiv
\usepackage[T1]{fontenc}   
\usepackage[utf8]{inputenc}
\usepackage{textcomp}      
\usepackage{lmodern}       

\usepackage{fullpage}

\usepackage{graphicx}
\usepackage{subcaption}
\usepackage{pifont}
\usepackage{pgfplots}

\usepackage{amsmath,amssymb,amsthm,enumitem,mathtools}
\newtheorem{theorem}{Theorem}[section]     
\newtheorem{lemma}[theorem]{Lemma}         
\newtheorem{corollary}[theorem]{Corollary} 
\newtheorem{proposition}[theorem]{Proposition}
\newtheorem{assumption}[theorem]{Assumption}

\theoremstyle{definition}
\newtheorem{definition}[theorem]{Definition}

\theoremstyle{remark}
\newtheorem{remark}[theorem]{Remark}

\usepackage{multirow}
\usepackage{algorithm}
\usepackage[noend]{algpseudocode} 

\usepackage{csvsimple}	
\usepackage{booktabs}   
\usepackage{adjustbox}  

\usepackage[colorlinks=true, citecolor=cyan, linkcolor=cyan, urlcolor=cyan]{hyperref}

\usepackage[xindy, acronyms]{glossaries}   
\makeglossaries

\newcommand{\snote}[1]{}
\newcommand{\bnote}[1]{}
\renewcommand{\snote}[1]{\textcolor{blue}{\textbf{\small [S: #1]}}}
\renewcommand{\bnote}[1]{\textcolor{blue}{\textbf{\small [B: #1]}}}

\newcommand{\paramdist}{\mathbf{Q}}
\newcommand{\paramspace}{\Theta}
\newcommand{\param}{\theta}

\newcommand{\reals}{\mathbf{R}}
\newcommand{\cardinality}{\#}

\newcommand{\st}{\tau}
\newcommand{\Z}{\mathbf{Z}}
\newcommand{\Prob}{\mathbf{P}}
\newcommand{\MIP}{\operatorname{MIP}}

\newcommand{\UB}{U}
\newcommand{\LB}{L}

\newcommand{\ov}{z^\star}
\newcommand{\covars}{X}

\newcommand{\nn}{h}
\newcommand{\activ}{\phi}

\newcommand{\loss}{\mathcal{L}}
\newcommand{\E}{\mathbf{E}}

\newcommand{\varspace}{\mathcal{X}}
\newcommand{\vars}{x}
\newcommand{\transpose}{T}

\newcommand{\algo}{\mathcal{A}}

\newcommand{\truegap}{g}
\newcommand{\gap}{g^{\mathrm{alg}}}
\newcommand{\predgap}{\hat{g}}

\newcommand{\tol}{\epsilon}
\newcommand{\errorprob}{\alpha}
\newcommand{\pst}{\hat{\tau}}

\newcommand{\calibration}{\mathcal{C}}

\newcommand{\ie}{\emph{i.e.}}

\newcommand{\termov}{s}
\newcommand{\lt}{\kappa}
\newcommand{\zbd}{{S_{\text{max}}}}
\newcommand{\classif}{\rho}

\newcommand{\vc}{\nu}
\newcommand{\indicator}{\mathbf{1}}

\newcommand{\test}{\mathcal{T}}

\newcommand{\LSTMLAYERS}{2}
\newcommand{\LSTMHIDDEN}{200}
\newcommand{\FFLAYERS}{2}
\newcommand{\FFHIDDEN}{200}

\newcommand{\weight}{w}
\newcommand{\ncalib}{c}
\newcommand{\rollmin}{\bar{g}}

\newcommand{\train}{\mathcal{D}}
\newcommand{\ntrain}{d}
\newcommand{\ntest}{l}
\newcommand{\runtime}{T}
\newcommand{\tbd}{T_{\max}}

\newcommand{\change}[1]{#1}

\title{Conformal Prediction for Early Stopping in Mixed Integer Optimization}
\author{Stefan Clarke and Bartolomeo Stellato}
\date{\small Department of Operations Research and Financial Engineering\\[.5em] Princeton University\\[.5em]\today}
\else
\newcommand{\snote}[1]{}
\newcommand{\bnote}[1]{}

\renewcommand{\snote}[1]{\textcolor{blue}{\textbf{\small [Stefan: #1]}}}
\renewcommand{\bnote}[1]{\textcolor{blue}{\textbf{\small [B: #1]}}}

\usepackage{microtype}
\usepackage{graphicx}
\usepackage{subcaption}
\usepackage{booktabs} 
\usepackage{multirow}

\usepackage{hyperref}

\usepackage[accepted]{icml2026}
\def\isaccepted{}

\usepackage{amsmath}
\usepackage{amssymb}
\usepackage{mathtools}
\usepackage{amsthm}

\usepackage[capitalize,noabbrev]{cleveref}

\usepackage{balance}

\theoremstyle{plain}
\newtheorem{theorem}{Theorem}[section]

\newtheorem{lemma}[theorem]{Lemma}

\theoremstyle{definition}

\newtheorem{assumption}[theorem]{Assumption}
\theoremstyle{remark}

\usepackage[textsize=tiny]{todonotes}

\icmltitlerunning{Conformal Prediction for Early Stopping in Mixed Integer Optimization}
\fi

\begin{document}

\ifarxiv
\maketitle
\else
\twocolumn[
  \icmltitle{Conformal Prediction for Early Stopping in Mixed Integer Optimization}



  \icmlsetsymbol{equal}{*}

  \begin{icmlauthorlist}
    \icmlauthor{Stefan Clarke}{princeton}
    \icmlauthor{Bartolomeo Stellato}{princeton}

  \end{icmlauthorlist}

  \icmlaffiliation{princeton}{Operations Research and Financial Engineering, Princeton University, Princeton, USA}

  \icmlcorrespondingauthor{Bartolomeo Stellato}{bstellato@princeton.edu}

  \icmlkeywords{Conformal Prediction, Mixed-Integer Optimization, Learning for Optimization}

  \vskip 0.3in
]



\printAffiliationsAndNotice{}  
\fi

\begin{abstract}
Mixed-integer optimization solvers often find optimal solutions early in the search, yet spend the majority of computation time proving optimality.
We exploit this by learning when to terminate solvers early on distributions of similar problem instances.
Our method trains a neural network to estimate the true optimality gap from the solver state, then uses conformal prediction to calibrate a stopping threshold with rigorous probabilistic guarantees on solution quality.
On \change{six} problem families from the Distributional MIPLIB library, our method reduces solve time by over 60\% while guaranteeing $0.1\%$-optimal solutions with 95\% probability \change{for new instances drawn from the same distribution}.
\end{abstract}

\section{Introduction}
Mixed-integer optimization problems arise in robotics~\cite{Shin2022DesignOK, Halsted2021ASO}, transportation~\cite{Toth2002TheVR, Bertsimas2019OnlineVR}, power systems~\cite{VanHentenryck2021MachineLF, Sarkar2018ReactivePM}, and control~\cite{Marcucci2019WarmSO}.
Many applications require decisions in real time, yet mixed-integer programming is NP-hard~\cite{garey1979computers}, and even modern solvers~\cite{gurobi, Qi2015ParallelizingTD} may be too slow for time-critical settings.

The standard method for solving mixed-integer programs is branch-and-bound~\cite{Land1960AnAM}, often combined with cutting planes to form branch-and-cut~\cite{Mitchell2002BranchandCutAF}.
As the algorithm progresses it maintains upper and lower bounds on the optimal objective value; when these bounds meet, the solver terminates with a provably optimal solution.
Heuristics help find good feasible solutions early~\cite{Fischetti2011HeuristicsIM, Angioni2025NeuralCO}, quickly reducing the upper bound, but closing the lower bound to certify optimality typically requires many more iterations.
The result is that the solver often discovers an optimal solution long before it can prove optimality (see Figure~\ref{fig:bbsolve}).
\begin{figure}[t]
    \centering
    \ifarxiv
    \includegraphics[width=0.55\linewidth, trim={0cm 0.2cm 0 0.2cm}]{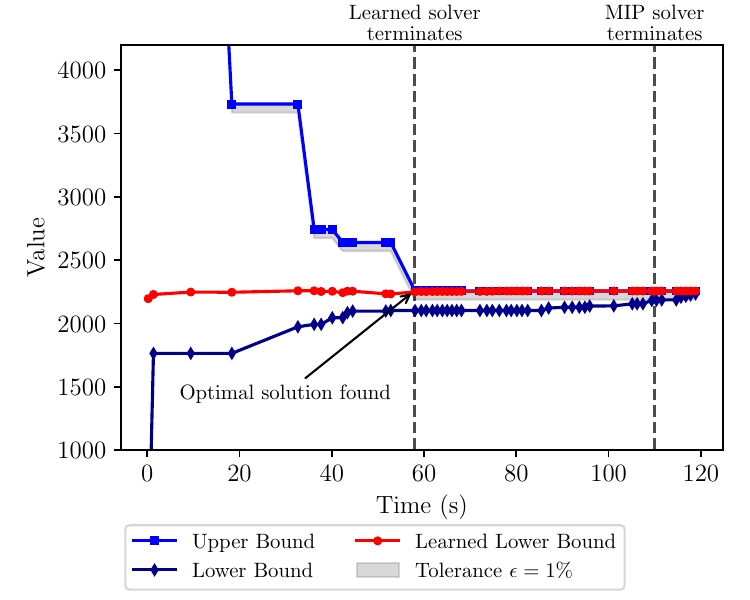}
    \else
    \includegraphics[width=1.0\linewidth, trim={0cm 0.2cm 0 0.2cm}]{bounds_plot.pdf}
    \fi
    \caption{Bounds for one solve of one instance of the Distributional MIPLIB optimal transmission switching problem family~\cite{Huang2024DistributionalMA}.
    The optimal solution is found after about 57 seconds, but the solve does not terminate until nearly 120 seconds have passed because the gap is not within the tolerance.
    Our method with a learned lower-bound terminates the solve much earlier, around 57 seconds.
    The upper-bound remains large until around 20 seconds, after which it quickly decreases as the solver finds feasible solutions.
    }
    \label{fig:bbsolve}
\end{figure}

Machine learning has emerged as a promising tool to accelerate mixed-integer optimization.
\citet{Bengio2021MachineLF} survey the landscape, while foundational work includes learning combinatorial optimization algorithms over graphs~\cite{Dai2017LearningCO} and learning branching policies~\cite{Khalil2016LearningTB}.
Subsequent efforts have targeted branching decisions~\cite{Scavuzzo2022LearningTB}, primal heuristics~\cite{Shen2021LearningPH, Angioni2025NeuralCO}, solver parameters~\cite{Patel2023ProgressivelySA}, and cutting planes~\cite{Wang2023LearningCS, Dragotto2023DifferentiableCL}.
These methods aim to reach a provably optimal solution faster by improving decisions within the solver.

In this paper we take a different approach: rather than speeding up the search for optimality, we learn when to stop early.
We train a neural network to predict the true optimality gap from the solver state and use conformal prediction~\cite{Angelopoulos2021AGI} to calibrate a stopping threshold with rigorous probabilistic guarantees on solution quality.
On benchmarks from the Distributional MIPLIB library~\cite{Huang2024DistributionalMA}, our method achieves significant speedups while provably returning near-optimal solutions with high probability.

\section{Related works}

\subsection{Learning to accelerate mixed-integer optimization}
Machine learning has emerged as a powerful tool to accelerate mixed-integer optimization~\cite{Bengio2021MachineLF, Zhang2022ASF}.
The most studied application is learning branching policies.
\citet{Khalil2016LearningTB} pioneered imitation learning to approximate strong branching.
\citet{Gasse2019ExactCO} introduced graph neural networks that exploit the bipartite variable-constraint structure of MILPs, inspiring subsequent work on hybrid models combining graph neural networks and multi-layer perceptrons~\cite{Gupta2020HybridMF} and tree-based formulations~\cite{Scavuzzo2022LearningTB}.
\citet{Scavuzzo2024MLaugmentedBB} provide a comprehensive survey of machine learning for branch-and-bound.
Beyond branching, researchers have applied learning to cutting plane selection~\cite{Wang2023LearningCS, Huang2021LearningTS, Dragotto2023DifferentiableCL}, solver configuration~\change{\cite{Patel2023ProgressivelySA, Gao2024NeuralSS, Liu2024L2PMIP}}, and primal heuristics~\change{\cite{Shen2021LearningPH, Nair2020SolvingMI, Angioni2025NeuralCO, Han2023GNNPredict}}.
Some methods learn to predict solutions directly without solving~\cite{Sun2023DIFUSCOGD}.
\change{Apollo-MILP~\cite{Liu2025ApolloMILP} uses an alternating prediction-correction loop to fix high-confidence variable values and reduce the problem dimension, using an uncertainty bound to identify reliable predictions; however, it targets primal solution quality rather than certifying when to stop the solver.}

All these approaches aim to reach a provably optimal solution faster by improving decisions within the solver.
Our method takes a fundamentally different approach: we observe that branch-and-bound often finds an optimal solution long before it can prove optimality (Figure~\ref{fig:bbsolve}), and we learn when to stop early rather than speeding up the proof.

\subsection{Conformal prediction}
Conformal prediction provides distribution-free uncertainty quantification with finite-sample guarantees~\cite{Angelopoulos2021AGI}.
It has been applied across domains including natural language processing~\cite{Campos2024ConformalPF}, time-series forecasting~\cite{Stankeviit2021ConformalTF}, medical imaging~\cite{Vazquez2022ConformalPI, Fayyad2023EmpiricalVO}, and autonomous driving~\cite{Doula2024ConformalPF}.
Recent work has extended conformal prediction to optimization under uncertainty, providing calibration guarantees for robust optimization~\cite{Jia2024EndtoEndCC}.
\citet{Clarke2025LearningBasedHA} used conformal prediction to bound the suboptimality of learned heuristics for parametric MIPs.
\change{This work differs from~\cite{Clarke2025LearningBasedHA} in that we learn a stopping criterion for the exact solver rather than validating an independent heuristic. In the previous work, the learned heuristic and the solution-finding heuristic are independent of one another, and the use of conformal prediction is to give bounds on the error of the learned heuristic. In this work, we explicitly set an acceptable error probability and magnitude of error, and use conformal prediction to calibrate a stopping criterion that achieves these guarantees.}

This work is arguably the first to use conformal prediction to accelerate exact MIP solving.
Rather than validating an independent heuristic, in this work we learn a termination criterion that stops the solver early while providing probabilistic guarantees on solution quality. 

\begin{figure}
    \centering
    \ifarxiv
    \includegraphics[width=0.55\linewidth, trim={0cm 12cm 27cm 0cm}]{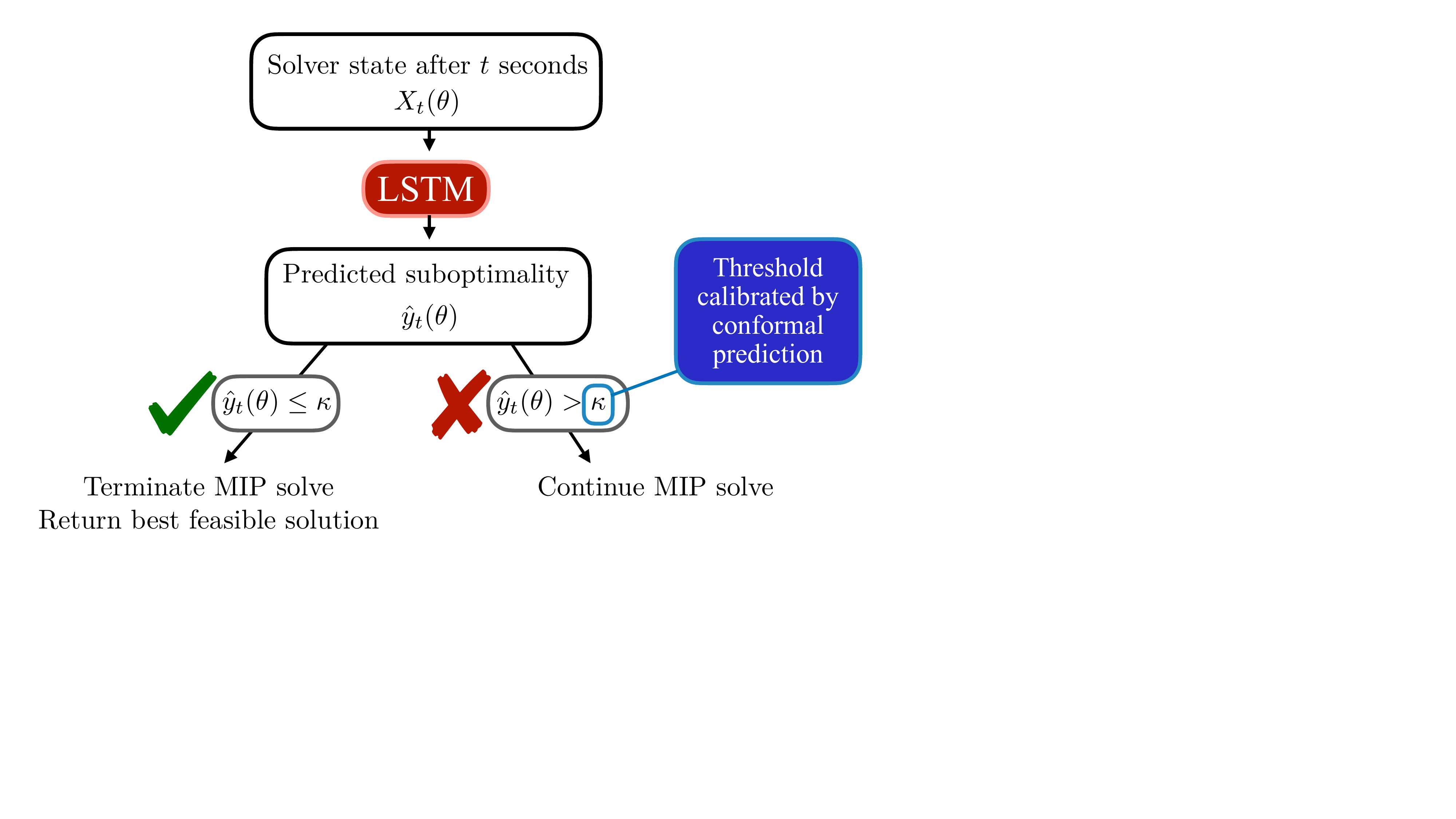}
    \else
    \includegraphics[width=\linewidth, trim={0cm 12cm 27cm 0cm}]{method_diagram.pdf}
    \fi
    \caption{Diagram of the proposed conformal prediction method for accelerating MIP solvers by learning to terminate early.
    The LSTM predicts the suboptimality-gap and the solver terminates and returns the current best solution when the predicted gap is below a threshold. The threshold is chosen by conformal prediction.}\label{fig:method_overview}
\end{figure}

\section{Parametric mixed integer optimization}
This section formalizes the problem setting.
We define parametric mixed-integer programs, describe how solvers maintain bounds over time, and show that standard termination criteria are conservative: the solver may hold a near-optimal solution long before it can prove optimality.

\subsection{Parametric mixed-integer programs} \label{sec:parametric}
Let $\paramdist$ be a distribution supported on a set $\paramspace \subseteq \reals^p$.
For each $\param \in \paramspace$ let $\MIP_\param$ be the optimization problem,
\begin{equation} \label{eq:master} \tag{$\MIP_\param$}
    \ov(\param) = \begin{array}[t]{ll}
         \text{minimize}  & {c(\param)}^\transpose \vars \\
         \text{subject to} & \vars \in \varspace(\param),
    \end{array}
\end{equation}
where $\varspace(\param) \subseteq \reals^m$ is the feasible set, which may include integrality constraints on some variables, and $c(\param) \in \reals^m$ is the cost vector.
We consider repeated solves of~\eqref{eq:master} for independent draws of $\param \sim \paramdist$.
We write $\vars^\star(\param)$ for an optimal solution to~\eqref{eq:master}.

\subsection{Solver behavior over time} \label{sec:solver-time}
Consider a fixed $\param \in \paramspace$.
An algorithm~$\algo(\param)$ designed to solve~\eqref{eq:master} can be viewed as a function of time.
At time $t \geq 0$ the algorithm holds an upper bound $\UB_\param(t)$ and a lower bound $\LB_\param(t)$ such that $\LB_\param(t) \le \ov(\param) \le \UB_\param(t)$.
If $\UB_\param(t) < \infty$ then it also holds a feasible solution $\vars_\param(t) \in \varspace(\param)$ such that $c(\param)^\transpose \vars_\param(t) = \UB_\param(t).$
We assume that $\param$ and $t$ uniquely identify the algorithm state; this approximation holds when problems are solved on the same hardware.

For the solver to eventually certify optimality, the bounds must converge.
We formalize this as the following assumption, which holds for any complete branch-and-bound algorithm.
\begin{assumption} \label{ass:terminal}
    $\UB_\param(t) - \LB_\param(t) \rightarrow 0$ as $t \rightarrow \infty$.
\end{assumption}
Define the~\emph{true optimality gap} at time $t > 0$ for algorithm $\algo(\param)$ by
\begin{equation*}
    \truegap_\param(t) = \frac{\UB_\param(t) - \ov(\param)}{|\ov(\param)|},
\end{equation*}
assuming $\ov(\param) \neq 0$.
Assumption~\ref{ass:terminal} implies that $\truegap_\param(t) \rightarrow 0$ as $t \rightarrow \infty$.
The true optimality gap is unknown during the solve but can be computed afterward since the solver returns~$\ov(\param)$.

\subsection{Standard termination criteria} \label{sec:deterministic}
Solvers typically terminate when the optimality gap
\begin{equation*}
    \gap_\param(t) = \frac{\UB_\param(t) - \LB_\param(t)}{|\LB_\param(t)|},
\end{equation*}
drops below a threshold $\tol > 0$.
We define the $\tol$-optimality stopping time for algorithm $\algo$ as
\begin{equation*}
    \st_\tol(\param) = \inf\{t \ge 0 \mid \gap_\param(t) \le \tol\}.
\end{equation*}
At time $\st_\tol(\param)$, the best feasible solution $\vars_\param(\st_\tol(\param))$ is $\tol$-optimal because $\truegap_\param(\st_\tol(\param)) \le \gap_\param(\st_\tol(\param)) \le \tol$, where the first inequality uses $\LB_\param(t) \le \ov(\param)$.
However, the first inequality may not be tight.
At time $\st_\tol(\param)$, the algorithm may have held the solution $\vars_\param(\st_\tol(\param))$ for some time already.
The algorithm could have terminated earlier and still returned an $\tol$-optimal solution.
This observation is the foundation of our approach.

\subsection{Probabilistic termination criteria} \label{sec:probabilistic}
At the deterministic stopping time $\st_\tol(\param)$, we have
\begin{equation*}
\Prob[\truegap_\param(\st_\tol(\param)) \le \tol] = 1,
\end{equation*}
where the probability is over $\param \sim \paramdist$.
We relax this by allowing a small error probability $\errorprob \in (0,1)$, seeking stopping times $\pst(\param)$ satisfying
\begin{align} \label{eq:prob}
\Prob[\truegap_\param(\pst(\param)) \le \tol] \ge 1 - \errorprob.
\end{align}
Allowing error probability $\errorprob$ enables much earlier termination than deterministic criteria.
Section~\ref{sec:learning} describes how we construct such stopping times by learning to approximate $\truegap_\param(t)$.

\section{Learning termination criteria} \label{sec:learning}
This section develops a learned stopping criterion with rigorous probabilistic guarantees.
The key idea is to train a predictor $\predgap_\param(t)$ that approximates the true optimality gap $\truegap_\param(t)$, then use conformal prediction to calibrate a threshold $\lt$ such that terminating when $\predgap_\param(t) \le \lt$ satisfies~\eqref{eq:prob}.

Given predictor $\predgap_\param(t)$, define the stopping time
\begin{equation*}
    \pst_\lt(\param) = \inf \{t \ge 0 \mid \predgap_\param(t) \le \lt\},
\end{equation*}
where $\lt$ depends on the desired tolerance $\tol$ and error probability $\errorprob$.
We assume $\predgap_\param(t) \to 0$ as $t \to \infty$ for all $\param \in \paramspace$; Section~\ref{sec:training} describes how the predictor enforces this property.
\subsection{Conformal prediction for termination} \label{sec:conformal}
We develop a conformal prediction framework to choose $\lt$ given $\tol$ and $\errorprob$ such that~\eqref{eq:prob} holds.
The construction requires two technical definitions.

Define the \emph{left-inverse} of a function $f:\reals_+ \rightarrow \reals_+$ by
\begin{equation*}
    f^{-1}(x) = \begin{cases}
        \inf\{t \ge 0 \mid f(t) \le x\} & x \le \sup f \\
        \infty & \text{otherwise}.
    \end{cases}
\end{equation*}
Define the \emph{rolling minimum} of the predictor by
\begin{equation*}
    \rollmin_\param(t) = \min_{0 \le s \le t} \predgap_\param(s).
\end{equation*}
The function $\rollmin_\param(t)$ is nonincreasing in $t$.
Let $\predgap_\param^{-1}$ and $\truegap_\param^{-1}$ denote the left-inverses of $\predgap_\param$ and $\truegap_\param$.
For $k \ge 0$ define $\pst_k(\param) = \predgap^{-1}_\param(k)$.
The following theorem, based on conformal prediction theory~\cite{Angelopoulos2021AGI}, shows how to choose $\lt$.
\begin{theorem} \label{thm:conformal}
    Let $\ncalib \in \Z_+$ and $n \in \{1, \dots, \ncalib\}$.
    Let $\param_1, \dots, \param_\ncalib, \param_{\ncalib+1} \sim \paramdist$ be independent.
    Let
    \begin{equation*}
        \lt = \sup \{ k \ge 0 \mid \#\{i \in \{1, \dots, \ncalib\} \mid \truegap_{\param_i}(\pst_k(\param_i)) \le \tol\} \ge n\}.
    \end{equation*}
    Then $\Prob[\truegap_{\param_{\ncalib+1}}(\pst_{\lt}(\param_{\ncalib+1})) \le \tol] \ge \change{n}/(\ncalib + 1)$.
\end{theorem}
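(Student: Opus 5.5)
The plan is to reduce the claim to the standard split-conformal quantile argument by attaching a scalar score to each instance. For $\param \in \paramspace$ define
\begin{equation*}
    S(\param) = \sup\{ k \ge 0 \mid \truegap_\param(\pst_k(\param)) \le \tol\}.
\end{equation*}
The scores $S_i = S(\param_i)$, $i=1,\dots,\ncalib+1$, are i.i.d.\ because the $\param_i$ are. The key structural claim is monotonicity. The map $k \mapsto \pst_k(\param) = \predgap_\param^{-1}(k)$ is nonincreasing in $k$, since a larger threshold is hit no later. So if I can show that $k \mapsto \truegap_\param(\pst_k(\param))$ is also ``monotone enough'', then the set $\{k : \truegap_\param(\pst_k(\param)) \le \tol\}$ is an interval $[0, S(\param)]$, possibly with the endpoint excluded. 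Then $\lt$ is exactly the $n$-th largest of $S_1,\dots,S_\ncalib$, and the event in the theorem is the event that $S_{\ncalib+1} \ge \lt$, modulo endpoint attainment.

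To get monotonicity, I would use that $\UB_\param(t)$ is the incumbent value, which is nonincreasing in $t$. Hence $\truegap_\param(t)$ is nonincreasing in $t$. A smaller $k$ gives a later stopping time, and therefore a smaller or equal true gap. So $\truegap_\param(\pst_k(\param)) \le \tol$ is a down-closed condition in $k$. Note that $\pst_0(\param)$ may equal $\infty$. In that case I interpret $\truegap_\param(\infty) = 0$ via Assumption~\ref{ass:terminal}, so $k=0$ always qualifies and $S \ge 0$.

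Second, the counting step. By definition, $\lt$ is the largest $k$ such that at least $n$ calibration scores satisfy $S_i \ge k$, roughly. This makes $\lt$ the $n$-th largest order statistic $S_{(n)}$ among the calibration scores. I would then apply exchangeability. Among the $\ncalib+1$ i.i.d.\ scores, the rank of $S_{\ncalib+1}$ is uniform if there are no ties, and ties only help. Hence
\begin{equation*}
    \Prob[S_{\ncalib+1} \ge S_{(n)}] \ge n/(\ncalib+1).
\end{equation*}
This holds because the event fails only when $S_{\ncalib+1}$ lies strictly below at least $n$ of the others, i.e.\ when its rank from the top is at least $n+1$. That happens with probability at most $(\ncalib+1-n)/(\ncalib+1)$. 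Finally, $S_{\ncalib+1} \ge \lt$ together with down-closedness gives $\truegap_{\param_{\ncalib+1}}(\pst_\lt(\param_{\ncalib+1})) \le \tol$.

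The main obstacle is the endpoint issue: whether the supremum in $S$ (and in $\lt$) is attained, so that $S \ge \lt$ really implies the good event at $k=\lt$ itself. I would handle it with right-continuity. The left-inverse $k \mapsto \pst_k$ uses an infimum over $\{t : \predgap(t) \le k\}$. For $k$ decreasing to $S$, the times $\pst_k$ increase to a limit, and the incumbent $\UB$ is piecewise constant and right-continuous in $t$. From this I would argue that the set of good $k$ is closed at its right end. If that fails, I would instead work with the open-interval version, comparing $S_{\ncalib+1} > \lt$ against ties. Since at least $n$ calibration points achieve a good outcome at $\lt$ by closure of the sup, the same rank argument goes through with $\ge$ replaced consistently.
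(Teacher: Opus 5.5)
Your skeleton is the paper's: a scalar score per instance, down-closedness in $k$, $\lt$ equal to the $n$-th largest calibration score, then the rank argument of Lemma~\ref{lemma:ordering}. The difference is in how the endpoint is treated. The paper writes the score in closed form, $Z=\rollmin_\param(\truegap_\param^{-1}(\tol))$, and uses \eqref{eq:confproof1} to declare the good set to be $\{k : k\le Z\}$, so attainment never comes up. You keep the score as a supremum and must therefore prove attainment, and that step fails.

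Letting $k\downarrow S$ only involves bad thresholds, and $\pst_k\uparrow\pst_S$ approaches from the left in time, where right-continuity of $\UB$ gives nothing. In fact, right-continuity of the incumbent yields
\[
G_\param:=\{k\ge 0 : \truegap_\param(\pst_k(\param))\le\tol\}=\{k\ge 0 : k<\predgap_\param(t)\ \text{for all } t<\truegap_\param^{-1}(\tol)\},
\]
so $S(\param)=\inf_{t<\truegap_\param^{-1}(\tol)}\predgap_\param(t)$ lies in $G_\param$ iff this infimum is \emph{not} attained. When predictions are updated only at callbacks, the infimum is a minimum over finitely many values, so typically $G_\param=[0,S)$.

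Your fallback then breaks as well. At $k=\lt=S_{(n)}$ at most $n-1$ calibration instances are good, not ``at least $n$ by closure''. With ties, the bound can fail outright. Suppose every instance has the same trajectory, with $\predgap\equiv 1$ on $[0,2)$, decreasing to $0$ afterwards, and the incumbent first $\tol$-optimal at $t=1$. Then every $G_{\param_i}=[0,1)$, $\lt=1$, $\pst_1=0$, and the test instance fails with probability one.

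The missing ingredient is an explicit extra hypothesis, and you should state it. The cleanest choice is that the scores are almost surely distinct. Then $\lt\in G_{\param_{\ncalib+1}}$ whenever $S_{\ncalib+1}>S_{(n)}$. That is exactly the event that $S_{\ncalib+1}$ ranks among the top $n$ of all $\ncalib+1$ scores, which has probability $n/(\ncalib+1)$ whether the sets are open or closed.

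Alternatively, assume $\rollmin_\param$ is never constant on an interval $[t_0,\truegap_\param^{-1}(\tol))$. Then the infimum is not attained, every $G_\param$ is closed, and your $\ge$ version goes through.

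The paper's \eqref{eq:confproof1} silently assumes regularity of this kind. In the example it fails at $k=t=1$: $\rollmin(1)=1\ge 1$ but $\predgap^{-1}(1)=0<1$. So the paper's proof has the same hole; you located it correctly, but your patch does not close it.
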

The proof relies on the following lemma.
\begin{lemma} \label{lemma:ordering}
    Let $Z_1, \dots, Z_{\ncalib + 1}$ be iid random variables in $\reals$.
    Let $Z_{[1]}, \dots, Z_{[\ncalib]}$ denote $Z_1, \dots, Z_\ncalib$ arranged in increasing order.
    For any $n \in \{1, \dots, \ncalib\}$,
    $$
    \Prob[Z_{\ncalib + 1} \ge Z_{[\ncalib + 1 - n]}] \ge n / (\ncalib + 1).
    $$
\end{lemma}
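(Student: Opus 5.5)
We prove Lemma~\ref{lemma:ordering} by a standard exchangeability argument that allows ties. Set $m = \ncalib + 1 - n \in \{1,\dots,\ncalib\}$. The event $Z_{\ncalib+1} < Z_{[m]}$ says that $Z_{\ncalib+1}$ is strictly smaller than at least $\ncalib+1-m = n$ of the calibration values. So it suffices to show that this event has probability at most $(\ncalib+1-n)/(\ncalib+1) = m/(\ncalib+1)$.

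First we define, for each $j \in \{1,\dots,\ncalib+1\}$, the indicator $A_j$ that $Z_j$ is strictly smaller than at least $n$ of the other $\ncalib$ variables $\{Z_i : i \neq j\}$. Since the $Z_i$ are iid, the joint law is invariant under permutations, so $\Prob[A_j]$ does not depend on $j$. In particular, $\Prob[A_{\ncalib+1}] = \frac{1}{\ncalib+1}\E\bigl[\sum_{j} \indicator_{A_j}\bigr]$.

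Next we bound $\sum_j \indicator_{A_j}$ deterministically by $m$. Sort all $\ncalib+1$ values. If $Z_j$ is strictly below at least $n$ others, then at most $\ncalib - n = m-1$ values lie at or below it, excluding itself. Hence $Z_j$'s value is among the $m$ smallest order statistics of the full sample, in the sense that $Z_j < W_{(\ncalib+2-n)}$, where $W_{(1)}\le\dots\le W_{(\ncalib+1)}$ is the full sorted sample. Suppose $m+1$ indices satisfied this. Then $m+1$ values would be strictly less than $W_{(m+1)}$, which contradicts the definition of order statistics. So at most $m$ indices can satisfy $A_j$, and $\Prob[A_{\ncalib+1}] \le m/(\ncalib+1)$. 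Taking the complement gives $\Prob[Z_{\ncalib+1}\ge Z_{[m]}] \ge n/(\ncalib+1)$.

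The main obstacle is handling ties correctly. With ties, ranks are not uniform, but strict-inequality counting still gives the deterministic bound of at most $m$ indices. This is the step to check carefully: the equivalence between ``$Z_{\ncalib+1} < Z_{[m]}$'' and ``$Z_{\ncalib+1}$ is strictly below at least $n$ of $Z_1,\dots,Z_\ncalib$'' holds because $Z_{[m]}$ is the $m$-th smallest value, so exactly $\ncalib - m + 1 = n$ calibration values are $\ge Z_{[m]}$. It is also worth noting that iid can be weakened to exchangeability, which is all the argument uses.
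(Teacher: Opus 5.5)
Your proof is correct, and it reaches the bound by a different mechanism than the paper. The paper ranks $Z_{\ncalib+1}$ among all $\ncalib+1$ values with random tie-breaking. It asserts that this rank is uniform on $\{1,\dots,\ncalib+1\}$ by exchangeability, and notes that a rank of at least $\ncalib+2-n$ forces $Z_{\ncalib+1}\ge Z_{[\ncalib+1-n]}$. You instead work with the complementary event, defined through strict inequalities. You show deterministically that at most $m=\ncalib+1-n$ indices can satisfy it, and then average by symmetry: $\Prob[A_{\ncalib+1}]=\frac{1}{\ncalib+1}\E\bigl[\sum_j \indicator_{A_j}\bigr]\le m/(\ncalib+1)$.

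Your route handles ties without any auxiliary randomization, since the counting step is purely combinatorial. The paper's uniform-rank claim implicitly needs the tie-breaking randomness to be independent of the data, so that the augmented tuple stays exchangeable, and the paper leaves this unstated. In exchange, the paper's route is shorter and gives the exact (uniform) distribution of the tie-broken rank.

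One small slip: in your last paragraph, with ties it is \emph{at least} (not exactly) $\ncalib-m+1=n$ calibration values that are $\ge Z_{[m]}$. This is harmless, because the argument only needs the inclusion $\{Z_{\ncalib+1}<Z_{[m]}\}\subseteq A_{\ncalib+1}$, and that uses only ``at least''.
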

\begin{proof}
    Let $m$ be the rank of $Z_{\ncalib+1}$ among $Z_1, \dots, Z_{\ncalib + 1}$ with random tie-breaking.
    Since $m \ge \ncalib + 2 - n$ implies $Z_{\ncalib + 1} \ge Z_{[\ncalib + 1 - n]}$,
    \begin{align*}
        \Prob[Z_{\ncalib + 1} \ge Z_{[\ncalib + 1 - n]}] &\ge \Prob[m \ge \ncalib + 2 - n] \\
        & = \sum_{j=\ncalib + 2 - n}^{\ncalib + 1} \Prob[m = j], \\
        &= \sum_{j=\ncalib + 2 - n}^{\ncalib + 1} \frac{1}{\ncalib + 1} \\
        &= \frac{n}{\ncalib + 1}.
    \end{align*}
    where the second-last equality holds by exchangeability of $Z_1, \dots, Z_{\ncalib+1}$.
\end{proof}
\begin{proof}[Proof of Theorem~\ref{thm:conformal}]
    By definition, $\lt$ is the largest $k \ge 0$ such that at least $n$ of $\{\truegap_{\param_i}(\pst_k(\param_i))\}_{i=1}^\ncalib$ are at most $\tol$.
    Since $\truegap_\param(t)$ is nonincreasing,
    $$
    \truegap_\param(t) \le \tol \iff \truegap_\param^{-1}(\tol) \le t.
    $$
    We also have $\rollmin_\param^{-1} = \predgap_\param^{-1}$, so
    \begin{align}
        \rollmin_\param(t) \ge k \iff \rollmin_\param^{-1}(k) \ge t \iff \predgap_\param^{-1}(k) \ge t. \label{eq:confproof1}
    \end{align}
    Observe that,
    \begin{align*}
    \truegap_{\param_i}^{-1}(\tol) \le \pst_k(\param_i) &\iff \truegap_{\param_i}^{-1}(\tol) \le \rollmin_{\param_i}^{-1}(k) \\&\iff \rollmin_{\param_i}(\truegap_{\param_i}^{-1}(\tol)) \ge k,
    \end{align*}
    where the first equivalence is the definition of $\pst_k$ and the second is by~\eqref{eq:confproof1}.
    Therefore, by definition, $\lt$ is exactly
    \begin{equation*}
        \sup\{k \ge 0 \mid \cardinality\{i \in \{1, \dots, \ncalib\} \mid \rollmin_{\param_i}(\truegap_{\param_i}^{-1}(\tol)) \ge k \} \ge n \},
    \end{equation*}
    which corresponds to the $n$-th largest element of $\{\rollmin_{\param_i}(\truegap_{\param_i}^{-1}(\tol))\}_{i=1}^\ncalib$.
    By Lemma~\ref{lemma:ordering} applied to $Z_i = \rollmin_{\param_i}(\truegap^{-1}_{\param_i}(\tol))$,
    \begin{align*}
    \Prob[\rollmin_{\param_{\ncalib+1}}(\truegap^{-1}_{\param_{\ncalib + 1}}(\tol)) \ge \lt] &= \Prob[Z_{\ncalib + 1} \ge Z_{[\ncalib + 1 - n]}] \\ & \ge n / (\ncalib + 1).
    \end{align*}
    Reversing the equivalences above yields $\Prob[\truegap_{\param_{\ncalib+1}}(\predgap^{-1}_{\param_{\ncalib+1}}(\lt)) \le \tol] \ge n/(\ncalib + 1)$.
\end{proof}

To apply the theorem, fix tolerance $\tol > 0$ and error probability $\errorprob > 0$.
Given a \emph{calibration dataset} $\calibration = \{\param_i\}_{i=1}^\ncalib$ of independent draws from $\paramdist$, choose $n$ such that $n/(\ncalib+1) \ge 1 - \errorprob$.
Theorem~\ref{thm:conformal} yields a threshold $\lt$ such that for any new instance $\param_{\ncalib+1} \sim \paramdist$ independent of $\calibration$, terminating at time $\pst_{\lt}(\param_{\ncalib+1})$ gives
\begin{equation*}
    \Prob[\truegap_{\param_{\ncalib+1}}(\pst_{\lt}(\param_{\ncalib+1})) \le \tol] \ge 1 - \errorprob.
\end{equation*}

\change{
\paragraph{Remark (multiple testing).}
A naive approach would predict the optimality gap at every solver callback and terminate the first time the prediction drops below $\tol$.
This constitutes multiple testing: each callback represents a new hypothesis test, and the probability of at least one false positive (stopping before a $\tol$-optimal solution has been found) grows with the number of callbacks, which can number in the hundreds per solve.
Our conformal approach avoids this issue because the threshold $\lt$ is calibrated \emph{once} on the calibration set, prior to deployment.
The guarantee in Theorem~\ref{thm:conformal} covers the \emph{entire} trajectory: the single pre-calibrated threshold $\lt$ ensures that the first time $\predgap_\param(t) \le \lt$ occurs, the solution is $\tol$-optimal with the stated probability, with no multiple-testing correction required.
}

\subsection{Training the predictor} \label{sec:training}
We parametrize $\truegap_\param(t)$ using a neural network.
Let $\covars_\param(t) \in \reals^M$ be the solver state vector at time $t$, containing the upper and lower bounds on $\ov(\param)$, the current best feasible solution $\vars_\param(t)$, the number of explored nodes, the parameter $\param$, and the elapsed solve time.
Let $\nn: \reals^M \rightarrow \reals_+$ be a neural network.
For $l \le u$ define the squashing function
\begin{equation}
    \activ(x \mid l, u) = (u - l)\frac{e^x}{1 + e^x},
\end{equation}
which satisfies $\activ(x \mid l, u) \in [0, u-l]$ for all $x \in \reals$.
The predictor is
\begin{equation*}
\predgap_\param(t) = \activ\big(\nn(\covars_\param(t)) \mid \LB_\param(t), \UB_\param(t)\big).
\end{equation*}
By Assumption~\ref{ass:terminal}, $\UB_\param(t) - \LB_\param(t) \to 0$ as $t \to \infty$, so $\predgap_\param(t) \to 0$.

We train the predictor to approximate $\UB_\param(t) - \ov(\param)$ using the weighted loss
\begin{equation*}
    \loss = \E\left[ \int_0^\infty \weight_{\param}(t) \Big(\predgap_\param(t) - \big(\UB_\param(t) - \ov(\param)\big)\Big)^2 dt\right],
\end{equation*}
where $\weight_\param(t) \ge 0$ is a time-weighting with $\int_0^\infty \weight_{\param}(t) dt = 1$.
Let $T_\param$ denote the termination time of branch-and-bound on problem~\eqref{eq:master}.
For discretization step $\delta > 0$, let $S_\param = \lceil T_\param / \delta \rceil$.
The empirical loss over training data $\train = \{\param_i\}_{i=1}^\ntrain$ is
\begin{equation} \label{eq:loss2}
    \hat{\loss} = \frac{1}{\ntrain} \sum_{i=1}^\ntrain \sum_{t=0}^{S_{\param_i}} \weight_{\param_i}(t \delta) \Big(\predgap_{\param_i}(t \delta) - y_{\param_i}(t \delta)\Big)^2,
\end{equation}
where $y_{\param_i}(t \delta) = \UB_{\param_i}(t \delta) - \ov({\param_i})$.
We minimize~\eqref{eq:loss2} using stochastic gradient descent.
The weighting
\begin{equation}
    \weight_{\param}(t) \propto \frac{1}{y_{\param}(t)} \label{eq:weight}
\end{equation}
assigns highest weight to times when $y_{\param}(t)$ is small.
Prediction accuracy matters most near $y_{\param}(t) \approx \tol$, since this is when the termination decision is made.
Because $\tol$ is typically small, the weighting~\eqref{eq:weight} focuses the predictor on the critical region where $\truegap_{\param}(t) \approx \tol$.


\subsection{Method overview} \label{sec:overview}
The overall pipeline comprises three steps.
\begin{enumerate}
    \item \textbf{Training.} Sample $\ntrain$ instances $\param \sim \paramdist$ to form $\train = \{\param_i\}_{i=1}^\ntrain$ and train the predictor $\truegap_\param(t)$ by minimizing~\eqref{eq:loss2}.
    \item \textbf{Calibration.} Sample $\ncalib$ instances $\param \sim \paramdist$ to form $\calibration = \{\param_i\}_{i=1}^\ncalib$.
    Fix tolerance $\tol > 0$ and error probability $\errorprob \in (0,1)$.
    Compute $\lt$ via Theorem~\ref{thm:conformal}.
    \item \textbf{Deployment.} On a new instance $\param$, terminate at time $\pst_\lt(\param)$ (when $\truegap_\param(t)$ first drops below $\lt$) and return the best feasible solution.
\end{enumerate}
The returned solution is $\tol$-optimal with probability at least $1 - \errorprob$ over the calibration set and test instance.
Figure~\ref{fig:method_overview} illustrates the method.

\section{Theoretical guarantees}
Let the suboptimality at termination be
\begin{equation*}
    \termov(\param, \lt) = \frac{\change{\UB_\param(\pst_{\lt}(\param))} - \ov(\param)}{|\ov({\param})|}.
\end{equation*}
By construction, our method satisfies
\begin{equation}
\Prob[\termov(\param, \lt) \le \tol] \ge 1 - \errorprob, \label{eq:confprob2}
\end{equation}
where randomness is over the calibration set $\calibration$ and $\param \sim \paramdist$.
Define the empirical and expected suboptimality of threshold $\lt$ as
\begin{equation*}
    \hat{S}(\lt) = \frac{1}{\ncalib} \sum_{i=1}^\ncalib \termov(\param_i, \lt), \quad S(\lt) = \E_{\param \sim \paramdist} [\termov(\param, \lt) \mid \calibration],
\end{equation*}
and the empirical and expected runtime as
\begin{equation*}
    \hat{\runtime}(\lt) = \frac{1}{\ncalib} \sum_{i=1}^\ncalib \pst_\lt(\param_i), \quad \runtime(\lt) = \E_{\param \sim \paramdist} [\pst_\lt(\param) \mid \calibration].
\end{equation*}
The guarantee~\eqref{eq:confprob2} averages over both the calibration set and test point.
This section establishes stronger \emph{sample-conditional} guarantees~\cite{Duchi2025AFO} that hold for a fixed calibration set with high probability:
\begin{enumerate}
    \item \textbf{Expected suboptimality and runtime} (Section~\ref{sec:expected}, Theorem~\ref{thm:expect}): $S(\lt) \le \hat{S}(\lt) + O(1/\sqrt{\ncalib})$ and $\runtime(\lt) \le \hat{\runtime}(\lt) + O(1/\sqrt{\ncalib})$, so the expected suboptimality and runtime on new instances are close to their empirical values on calibration data.
    \item \textbf{Success probability} (Section~\ref{sec:success}, Theorem~\ref{thm:condprob}): $\Prob[\termov(\param, \lt) \le \tol \mid \calibration] \ge 1 - \alpha - O(1/\sqrt{\ncalib})$, so the conditional probability of achieving tolerance $\tol$ concentrates near $1 - \alpha$.
\end{enumerate}


\subsection{Expected suboptimality and runtime} \label{sec:expected}
We bound the expected suboptimality and runtime conditional on the calibration set.
Let $\zbd$ bound the suboptimality, \ie, $\termov(\param, \lt) \le \zbd$ for all $\lt \ge 0$ and $\param \in \paramspace$.
Let $\tbd$ bound the runtime, \ie, $\pst_\lt(\param) \le \tbd$ for all $\lt \ge 0$ and $\param \in \paramspace$.
These bounds can be enforced via deterministic stopping criteria.
\begin{theorem} \label{thm:expect}
    For any $\delta > 0$, with probability at least $1 - \delta$ over the calibration set $\calibration$,
    \begin{equation*}
        S(\lt) \le \hat{S}(\lt) + \zbd \sqrt{\frac{2\log (e \ncalib)}{\ncalib}} + \zbd \sqrt{\frac{\log(1/\delta)}{2\ncalib}}.
    \end{equation*}
    The runtime satisfies the analogous bound
    \begin{equation*}
        \runtime(\lt) \le \hat{\runtime}(\lt) + \tbd \sqrt{\frac{2\log (e \ncalib)}{\ncalib}} + \tbd \sqrt{\frac{\log(1/\delta)}{2\ncalib}}.
    \end{equation*}
\end{theorem}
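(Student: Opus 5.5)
Since $\lt$ is a data-dependent function of the calibration set $\calibration$, Hoeffding's inequality cannot be applied directly to $\termov(\param_i,\lt)$. I will instead prove a bound that holds uniformly over all thresholds $k \ge 0$, and then specialize it to $k=\lt$. The structure of the bound supports this reading. The term $\zbd\sqrt{\log(1/\delta)/(2\ncalib)}$ is a McDiarmid/Hoeffding deviation term, and $\zbd\sqrt{2\log(e\ncalib)/\ncalib}$ is a Massart-type complexity term for a class whose growth function is at most $e\ncalib$. Concretely, define the function class
\[
\mathcal{F} = \{\param \mapsto \termov(\param,k) : k \ge 0\},
\]
whose members take values in $[0,\zbd]$. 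The target is
\[
\sup_{k\ge 0}\Big(\E_\param[\termov(\param,k)] - \tfrac{1}{\ncalib}\textstyle\sum_i \termov(\param_i,k)\Big) \le \zbd\sqrt{\tfrac{2\log(e\ncalib)}{\ncalib}} + \zbd\sqrt{\tfrac{\log(1/\delta)}{2\ncalib}}
\]
with probability at least $1-\delta$. Setting $k=\lt$ then gives the claim.

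\emph{Step 1 (concentration of the supremum).} Changing a single $\param_i$ changes the supremum by at most $\zbd/\ncalib$. McDiarmid's inequality therefore bounds the supremum by its expectation plus $\zbd\sqrt{\log(1/\delta)/(2\ncalib)}$ with probability at least $1-\delta$.

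\emph{Step 2 (symmetrization).} The standard symmetrization argument bounds the expected supremum by the Rademacher complexity term $\E\sup_k \frac{1}{\ncalib}\sum_i \sigma_i \termov(\param_i,k)$. Depending on which constant the symmetrization produces, this may carry a factor $2$; I would absorb that factor into the Massart term below so as to match the stated constant.

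\emph{Step 3 (Massart's finite-class lemma).} Fix the sample $\param_1,\dots,\param_\ncalib$. For each $i$, the stopping time $\pst_k(\param_i)=\predgap^{-1}_{\param_i}(k)=\rollmin^{-1}_{\param_i}(k)$ is nonincreasing in $k$. It is also a step function of $k$, since solver states change only at callbacks. Along the trajectory, $\UB_{\param_i}$ is nonincreasing in $t$, so $\termov(\param_i,k)$ is monotone (nondecreasing) in $k$. I would use this monotonicity to count the distinct restriction vectors $(\termov(\param_1,k),\dots,\termov(\param_\ncalib,k))\in[0,\zbd]^\ncalib$ as $k$ ranges over $[0,\infty)$. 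Each vector has Euclidean norm at most $\zbd\sqrt{\ncalib}$. Massart's lemma then gives the bound $\zbd\sqrt{2\log N/\ncalib}$, where $N$ is the number of distinct vectors. The runtime bound follows by the identical argument with $\pst_k(\param)\in[0,\tbd]$, which is also monotone in $k$.

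\emph{Main obstacle.} The hard part is bounding $N$ by $e\ncalib$. A monotone real-valued function of $k$ can take infinitely many values, so counting vectors is not immediate. One route is to reduce to threshold indicators: write $\termov(\param,k)=\int_0^{\zbd}\indicator[\termov(\param,k)>u]\,du$. For each $u$, the sets $\{\param : \termov(\param,k)>u\}$ are nested in $k$, so the indicator class is a chain with VC dimension $1$. Sauer's lemma then gives a growth function of at most $\ncalib+1\le e\ncalib$. The bound would then pass to the integral by convexity of the supremum, which costs only the factor $\zbd$. The alternative route uses the finitely many callback times, which give a finite number of breakpoints in $k$ per instance. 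I would first try the VC-chain argument, since it matches the $\log(e\ncalib)$ form of the stated bound.
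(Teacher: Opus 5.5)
Your route is valid in outline and differs genuinely from the paper's; one step needs repair.

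\textbf{Comparison with the paper.} The paper applies the layer-cake identity directly to the deviation $S(\lt)-\hat S(\lt)$. It bounds this by $\zbd\sup_u$ of the deviation of the indicator classes $\{\classif(\lt,u)\}$, cites Mohri's VC corollary for each fixed $u$, and checks VC dimension $1$ with the same nestedness-in-$\lt$ argument you give. You instead apply McDiarmid once to $\sup_k$ of the real-valued deviation, symmetrize, and then use the layer-cake inside the empirical Rademacher average. That step uses $\sup_k\int_0^{\zbd}\le\int_0^{\zbd}\sup_k$ for each fixed sign vector, then exchanges $\E_\sigma$ with $\int du$. The paper's version is shorter because the VC bound is a black box. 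Yours buys rigor:
\begin{itemize}
\item The paper applies a probability-$(1-\delta)$ bound separately for each $u$ and then takes $\sup_u$, which needs the bound to hold simultaneously over uncountably many $u$.
\item The paper writes an absolute value while citing a one-sided bound.
\item In your argument the only probabilistic step is the single McDiarmid inequality, so the result is automatically uniform in $u$ and $k$ and is exactly the one-sided inequality claimed.
\end{itemize}

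\textbf{The constant.} Symmetrization does produce the factor $2$, and it cannot be ``absorbed.'' Massart applied naively to $\{0,1\}$-valued vectors of norm at most $\sqrt{\ncalib}$ gives $2\zbd\sqrt{2\log(\ncalib+1)/\ncalib}$. Since $4\log(\ncalib+1)>1+\log\ncalib$ for every $\ncalib\ge1$, this exceeds the stated term.

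The fix is to center before applying Massart. Because $\E\sigma_i=0$, the Rademacher average of the level-$u$ indicator class is unchanged if you subtract $1/2$ from every function. The at most $\ncalib+1$ restricted vectors then lie in $\{-1/2,1/2\}^{\ncalib}$ with norm $\sqrt{\ncalib}/2$. Massart then gives $\tfrac12\sqrt{2\log(\ncalib+1)/\ncalib}$, which cancels the $2$ exactly. Integrating over $u\in[0,\zbd]$ and using $\ncalib+1\le e\ncalib$ yields precisely $\zbd\sqrt{2\log(e\ncalib)/\ncalib}$. This is the same mechanism behind the constant in Mohri's corollary, where the $0$-$1$ loss class has half the Rademacher complexity of the $\pm1$ hypothesis class.

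\textbf{Choice of route.} Commit to the layer-cake route rather than counting $N$ or callback breakpoints. The number of distinct real-valued vectors grows with the number of incumbent updates along the calibration trajectories. That count is not controlled by $\ncalib$, so neither alternative yields $\log(e\ncalib)$.

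The runtime bound goes through identically, since the sets $\{\param:\pst_k(\param)>u\}$ are also nested in $k$.
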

\begin{proof}[Proof of Theorem~\ref{thm:expect}]
    We prove the suboptimality bound; the runtime bound follows analogously.
    The proof uses VC-dimension to obtain uniform convergence.
    For $u \in [0, \zbd]$ and $\lt \in \reals_+$ define the classifier
    \begin{equation*}
        \classif(\lt,u)(\param) = \indicator_{\termov(\param, \lt) > u},
    \end{equation*}
    which equals 1 if $\termov(\param, \lt) > u$ and 0 otherwise.
    Define the empirical and true risk as
    \begin{align*}
        \hat{S}(\classif(\lt, u)) =& \frac{1}{\ncalib}\sum_{i=1}^\ncalib \classif(\lt,u)(\param_i), \\ \quad S(\classif(\lt, u)) =& \underset{\param \sim \paramdist}{\E}[ \classif(\lt,u)(\param)].
    \end{align*}
    For fixed $u \in [0, \zbd]$ let $\vc(u)$ be the VC-dimension (Definition 3.10,~\cite{Mohri2012FoundationsOM}) of the set of functions $\{\classif(\lt, u) \mid \lt \in \reals_+\}$.
    Let $\hat{\paramdist}$ be a uniform distribution on~$\calibration$ (the empirical distribution for the calibration dataset).
    We have,
    \begin{align*}
        & |S(\lt) - \hat{S}(\lt)| \\ &= \Big|\underset{\param \sim \paramdist}{\E}[\termov(\param, \lt)] - \underset{\param \sim \hat{\paramdist}}{\E}[\termov(\param, \lt)]\Big| \\
        &= \Bigg| \int_{0}^\zbd \underset{\param \sim \paramdist}{\Prob}[\termov(\param, \lt)>u] - \underset{\param \sim \hat{\paramdist}}{\Prob}[\termov(\param, \lt)>u] \text{d}u \Bigg| \\
        &\le\zbd  \underset{u \in [0, \zbd]}{\sup} \big| S(\classif(\lt, u)) - \hat{S}(\classif(\lt, u)) \big| \\
        & \le \zbd \underset{u \in [0, \zbd]}{\sup}\sqrt{\frac{2 \vc(u) \log(e \ncalib /\vc(u))}{\ncalib}} + \zbd \sqrt{\frac{\log(1/\delta)}{2\ncalib}}.
    \end{align*}
    The first equality is the definition of the empirical distribution.
    The second uses $\E[X] = \int_0^\infty \Prob[X > u] \, du$ for nonnegative $X$.
    The third bounds the integral uniformly.
    The fourth applies Corollary 3.19 of~\citet{Mohri2012FoundationsOM}.
    It remains to show $\vc(u) = 1$.
    The function $\lt \mapsto \classif(\lt, u)(\param)$ is nondecreasing in $\lt$.
    For any $\param_1, \param_2$, let $\lt_i = \inf\{\lt : \classif(\lt, u)(\param_i) = 1\}$.
    If $\lt_1 \le \lt_2$, then $\classif(\lt, u)(\param_1) \ge \classif(\lt, u)(\param_2)$ for all $\lt$, so $\{\param_1, \param_2\}$ cannot be shattered.
    Thus $\vc(u) \le 1$, and since $\vc(u) \ge 1$ by definition, we have $\vc(u) = 1$.
\end{proof}

\subsection{Success probability} \label{sec:success}
The probability in~\eqref{eq:confprob2} is over both the calibration set and test point, which is equivalent to
\begin{equation*}
    \E_{\calibration \sim \paramdist^\ncalib}\Big[ \Prob_{\param \sim \paramdist}[\termov(\param, \lt) \le \tol \mid \calibration]\Big] \ge 1 - \alpha.
\end{equation*}
The next result shows that the conditional probability $\Prob[\termov(\param, \lt) \le \tol \mid \calibration]$ concentrates near $1 - \alpha$.
The proof follows from Proposition 2a of~\citet{Vovk2012ConditionalVO}; we include it for completeness.
\begin{theorem} \label{thm:condprob}
    For any $\delta > 0$, with probability at least $1 - \delta$ over the calibration set $\calibration$,
    \begin{equation*}
        \underset{\param \sim \paramdist}{\Prob}[\termov(\param, \lt) \le \tol \mid \calibration] \ge 1 - \alpha - \sqrt{\frac{\log(2/\delta)}{2\ncalib}}.
    \end{equation*}
\end{theorem}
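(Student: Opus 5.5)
We reduce the statement to a one-dimensional order-statistic fact, reusing the scores from the proof of Theorem~\ref{thm:conformal}. Set $Z(\param) = \rollmin_\param(\truegap_\param^{-1}(\tol))$ and $Z_i = Z(\param_i)$ for $i = 1, \dots, \ncalib$. These are iid real random variables with common distribution function $F$.

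That proof showed two facts. First, $\termov(\param, \lt) \le \tol$ holds iff $Z(\param) \ge \lt$. Second, $\lt$ is the $n$-th largest of $Z_1, \dots, Z_\ncalib$, where $n = \lceil (1-\errorprob)(\ncalib+1) \rceil$ is the smallest admissible choice. Conditional on $\calibration$, $\lt$ is fixed, so
\begin{equation*}
    \underset{\param \sim \paramdist}{\Prob}[\termov(\param, \lt) \le \tol \mid \calibration] = \Prob[Z \ge \lt] = 1 - F(\lt^-).
\end{equation*}
The statement therefore reduces to a lower bound, holding with high probability, on the upper-tail mass of the true distribution at an empirical upper order statistic.

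We then use the Dvoretzky--Kiefer--Wolfowitz inequality with Massart's constant. With probability at least $1-\delta$,
\begin{equation*}
    \sup_z |\hat G(z) - G(z)| \le \sqrt{\log(2/\delta)/(2\ncalib)},
\end{equation*}
where $G(z) = \Prob[Z \ge z]$ and $\hat G(z) = \frac{1}{\ncalib}\#\{i \mid Z_i \ge z\}$. This two-sided bound explains the $\log(2/\delta)$ in the statement, and it is the route of Proposition~2a of \citet{Vovk2012ConditionalVO}. Applying DKW to the left-continuous tail $G(z) = 1 - F(z^-)$ rather than to $F$ itself is legitimate: one can apply it to $-Z$, or note that the supremum over left limits equals the supremum over points.

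On this event, $G(\lt) \ge \hat G(\lt) - \sqrt{\log(2/\delta)/(2\ncalib)}$. Since $\lt$ is the $n$-th largest score, at least $n$ of the $Z_i$ satisfy $Z_i \ge \lt$, so $\hat G(\lt) \ge n/\ncalib \ge (1-\errorprob)(\ncalib+1)/\ncalib \ge 1-\errorprob$. Combining these gives the claimed bound. Ties only help here, because they increase the count.

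The main obstacle is the measurability and uniformity step. Because $\lt$ is data-dependent, a single-point Hoeffding bound does not suffice, which is exactly why DKW, being uniform over $z$, is needed. One must also check that the equivalence $\termov \le \tol \iff Z \ge \lt$ from Theorem~\ref{thm:conformal} holds pointwise for every fixed threshold, so that it transfers to the conditional probability. A secondary point is the degenerate case $n > \ncalib$, where $\lt$ is undefined and one takes $\lt = -\infty$, so success is trivial; it needs a one-line remark.
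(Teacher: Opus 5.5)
Your proposal is correct and follows essentially the same route as the paper: by construction of $\lt$ the empirical success frequency on $\calibration$ is at least $1-\alpha$, and Massart's form of the Dvoretzky--Kiefer--Wolfowitz inequality transfers this to the conditional success probability at a cost of $\sqrt{\log(2/\delta)/(2\ncalib)}$. Your explicit reduction to the one-dimensional score $Z(\param)$ and your use of the uniform-in-threshold version of the bound, which is needed because $\lt$ depends on $\calibration$, make rigorous a step that the paper's proof leaves implicit.
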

\begin{proof}
    Let $\hat{\paramdist}$ be the uniform distribution on $\calibration$ (the distribution over the empirical sample).
    By definition of $\lt$ we know that,
    \begin{equation} \label{eq:proof1}
        \underset{\param \sim \hat{\paramdist}}{\Prob} [\termov(\param, \lt) \le \tol] \ge 1 - \alpha.
    \end{equation}
    Let $\hat{P}_\epsilon = \underset{\param \sim \hat{\paramdist}}{\Prob} [\termov(\param, \lt) \le \tol]$ and $P_\epsilon = \underset{\param \sim\paramdist}{\Prob} [\termov(\param, \lt) \le \tol]$.
    By Corollary 1~\cite{Massart1990TheTC} the following is true, for all $\lambda > 0$, where the randomness is taken over the calibration set $\calibration$,
    \begin{equation*}
        \Prob[|\hat{P}_\epsilon -  P_\epsilon| \ge \lambda /\sqrt{\ncalib}] \le 2 \exp(-2 \lambda^2).
    \end{equation*}
    Set $\delta = 2 \exp(-2 \lambda^2)$ and rearrange this expression to get,
    \begin{equation*}
    \Prob\Bigg[|\hat{P}_\epsilon -  P_\epsilon| \ge \sqrt{\frac{\log(2/\delta)}{2\ncalib}}\Bigg] \le \delta.
    \end{equation*}
    This combined with~\eqref{eq:proof1} completes the proof of the first inequality.
\end{proof}
\subsection{Robustness to distribution shifts}
\change{
    We can also show that the conformal guarantees hold under a shifted distribution $\tilde{\paramdist}$ which is $\rho$-close to $\paramdist$ in total variation distance, so the method is robust to distribution shifts.
}
\begin{theorem} \label{thm:distshift}
    \change{
        Let $\tilde{\paramdist}$ be a distribution such that $\operatorname{TV}(\tilde{\paramdist}, \paramdist) \le \rho$.
        Then the guarantees of Theorems~\ref{thm:expect} and~\ref{thm:condprob} hold with respect to $\tilde{\paramdist}$, with an additional additive $\rho$ term.
        In particular, we have,
        \begin{equation*}
            \Prob[\termov(\param, \lt) \le \tol] \ge 1 - \alpha - \rho,
        \end{equation*}
        where the probability is over the shifted distribution test point $\param \sim \tilde{\paramdist}$ and calibration set $\calibration \sim \paramdist^\ncalib$.
        Further, we have, with probability at least $1 - \delta$ over the calibration set $\calibration$,
        \begin{equation*}
            \underset{\param \sim \tilde{\paramdist}}{\Prob}[\termov(\param, \lt) \le \tol \mid \calibration] \ge 1 - \alpha - \rho - \sqrt{\frac{\log(2/\delta)}{2\ncalib}}.
        \end{equation*}
    }
\end{theorem}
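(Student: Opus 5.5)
The plan rests on one elementary fact about total variation: for any measurable event $A \subseteq \paramspace$ and any bounded measurable $f:\paramspace \to [0, M]$,
\begin{equation*}
    |\tilde{\paramdist}(A) - \paramdist(A)| \le \rho, \qquad \Big|\E_{\param \sim \tilde{\paramdist}}[f(\param)] - \E_{\param \sim \paramdist}[f(\param)]\Big| \le M\rho .
\end{equation*}
The threshold $\lt$ is a measurable function of the calibration set $\calibration \sim \paramdist^\ncalib$ alone. So conditional on $\calibration$, the map $\param \mapsto \indicator_{\termov(\param, \lt) \le \tol}$ is a fixed measurable indicator, and I will apply the first inequality to the event $A_\calibration = \{\param : \termov(\param, \lt) \le \tol\}$. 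This gives, for every realization of $\calibration$,
\begin{equation*}
    \underset{\param \sim \tilde{\paramdist}}{\Prob}[\termov(\param, \lt) \le \tol \mid \calibration] \ge \underset{\param \sim \paramdist}{\Prob}[\termov(\param, \lt) \le \tol \mid \calibration] - \rho .
\end{equation*}

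For the marginal statement, I take expectations of this pointwise inequality over $\calibration \sim \paramdist^\ncalib$. The expectation of the right-hand conditional probability is the joint probability in~\eqref{eq:confprob2}, which is at least $1-\alpha$ by Theorem~\ref{thm:conformal} with $n/(\ncalib+1) \ge 1-\alpha$. The left-hand side integrates, by Fubini and independence of the test point from $\calibration$, to the joint probability with $\param \sim \tilde{\paramdist}$. This yields the bound $1-\alpha-\rho$.

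For the high-probability statement, I intersect the same pointwise inequality with the event of probability at least $1-\delta$ from Theorem~\ref{thm:condprob}. Because the TV bound holds for every $\calibration$, no additional union bound or loss in $\delta$ is needed. The result is $1-\alpha-\rho-\sqrt{\log(2/\delta)/(2\ncalib)}$.

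For the analogue of Theorem~\ref{thm:expect}, I apply the second inequality to $f = \termov(\cdot, \lt)$ with $M = \zbd$, and to $f = \pst_\lt(\cdot)$ with $M = \tbd$, again conditionally on $\calibration$. This adds $\zbd\rho$, respectively $\tbd\rho$. The statement's phrase ``additional additive $\rho$ term'' should be read as $\rho$ scaled by the relevant bound, or as exactly $\rho$ after normalizing these quantities to $[0,1]$. I will make that explicit in the proof.

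The main obstacle is justifying that $\lt$ depends only on $\calibration$ and is independent of the shifted test point, so that the TV bound can be applied conditionally with a fixed event. This requires that the calibration samples are drawn from $\paramdist$ and the test point independently from $\tilde{\paramdist}$. It also requires measurability of $\param \mapsto \termov(\param,\lt)$, which I will assume as implicit in the earlier theorems. Everything else is a direct substitution into Theorems~\ref{thm:conformal}, \ref{thm:expect} and~\ref{thm:condprob}.
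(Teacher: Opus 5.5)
Your proposal is correct and takes essentially the same route as the paper: a pointwise-in-$\calibration$ bound $\tilde{\paramdist}(A_\calibration) \ge \paramdist(A_\calibration) - \rho$, averaged over $\calibration$ for the marginal guarantee and combined with the high-probability event of Theorem~\ref{thm:condprob} (at no extra cost in $\delta$) for the conditional one. Your version is slightly more careful than the paper's in two places: you state the pointwise step as an inequality, whereas the paper writes it as an equality $P - |\tilde{P} - P|$, which holds only when $\tilde{P} \le P$; and you work out the analogue of Theorem~\ref{thm:expect}, which the paper asserts but does not prove, correctly noting that it picks up $\zbd\rho$ and $\tbd\rho$ rather than a bare $\rho$.
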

\proof
    \change{
        Write $\lt(\calibration)$ for the random variable $\lt$ as a function of the calibration data $\calibration$.
        For the first inequality,
        \begin{align*}
            & \Prob\left[\termov(\param, \lt) \le \tol\right] \\ &= \underset{\calibration}{\E}\left[\underset{\param \sim \tilde{\paramdist}}{\Prob}\left[\termov(\param, \lt(\calibration)) \le \tol\right]\right] \\
            &= \underset{\calibration}{\E}\left[\underset{\param \sim \paramdist}{\Prob}\left[\termov(\param, \lt(\calibration)) \le \tol\right] \right. \\ & \qquad \left. - \left| \underset{\param \sim \tilde{\paramdist}}{\Prob}\left[\termov(\param, \lt(\calibration)) \le \tol\right] -  \underset{\param \sim \paramdist}{\Prob}\left[\termov(\param, \lt(\calibration)) \le \tol\right] \right| \right] \\
            & \ge \underset{\calibration}{\E}\left[\underset{\param \sim \paramdist}{\Prob}\left[\termov(\param, \lt(\calibration)) \le \tol\right]\right] - \rho \\
            &\ge 1 - \alpha - \rho.
        \end{align*}
        For the second inequality let $$\tilde{P} = \underset{\param \sim \tilde{\paramdist}}{\Prob}[\param \in \{\param \mid \termov(\param, \lt) \le \tol\} \mid \calibration]$$ and $P = \underset{\param \sim \paramdist}{\Prob}[\param \in \{\param \mid \termov(\param, \lt) \le \tol\} \mid \calibration]$. Then we have,
        \begin{align*}
            & \underset{\param \sim \tilde{\paramdist}}{\Prob}[\termov(\param, \lt) \le \tol \mid \calibration] \\ &= \underset{\param \sim \tilde{\paramdist}}{\Prob}[\param \in \{\param \mid \termov(\param, \lt) \le \tol\} \mid \calibration]
            \\ &= \underset{\param \sim \paramdist}{\Prob}[\param \in \{\param \mid \termov(\param, \lt) \le \tol\} \mid \calibration] - \Big| \tilde{P} - P \Big|
            \\ &\ge \underset{\param \sim \paramdist}{\Prob}[\termov(\param, \lt) \le \tol \mid \calibration] - \rho \\
            &\ge 1 - \alpha - \rho - \sqrt{\frac{\log(2/\delta)}{2\ncalib}}.
        \end{align*}
    }
\qed
\subsection{Time to termination}
The guarantees above hold for any predictor, but meaningful speedup requires an accurate predictor $\predgap_\param(t)$.
Taking $\lt = -\infty$ achieves zero suboptimality but provides no speedup.
The threshold $\lt$ from Theorem~\ref{thm:conformal} satisfies all guarantees regardless of predictor quality, but if $\predgap_\param(t)$ is inaccurate, the stopping time $\pst_\lt(\param)$ may equal the standard termination time $\st_\tol(\param)$.
\section{Computational experiments}
We test our methods on parametric families of problems from the Distributional MIPLIB library~\cite{Huang2024DistributionalMA}.
\change{We include a series of plots and figures demonstrating the statistical variety of the datasets considered in Appendix~\ref{appendix:experimentplots}.}
\subsection{Experiment setup}
\paragraph{Hardware.} All models were trained on a NVIDIA A100 GPU.
Each optimization solve took place on AMD EPYC 9334 CPUs, and each optimization solve was allowed access to exactly 1GB of memory, and only one thread.
\paragraph{Predictor.} We parametrize $\nn$ by an LSTM with $\LSTMLAYERS$ layers and $\LSTMHIDDEN$ hidden units in each layer, followed by a feedforward neural network with $\FFLAYERS$ hidden layers and $\FFHIDDEN$ units in each layer.
All activation units but the final one are ReLU units.
The output dimension is one, and the output represents the approximate true optimality gap at time $t$.
We have $\predgap_\param(t) = \activ\big(\nn(\covars_\param(t)) \mid \LB_\param(t), \UB_\param(t)\big)$.
\change{
    We choose an LSTM because it is a simple model which can detect complex patterns in time-series data, but this is not a necessary part of our work. This learned model can be swapped for the most appropriate model for a given task.
    In some applications, the optimization problem might be sufficiently hard, or the solve might take sufficiently long, that a simple RNN/LSTM might not perform well. 
    In this case, practitioners may want to swap this component out for a transformer, or include a graph neural network to add awareness of the entire problem formulation.
}
\paragraph{Covariates.} At time $t$, the covariate vector $\covars_\param(t)$ consists of the current bounds $\UB_\param(t)$ and $\LB_\param(t)$, their 1, 3, and 5 second rolling averages, the elapsed solve time $t$, and the number of nodes expanded in the branch-and-bound tree.
Unlike typical learn-to-optimize settings~\cite{Scavuzzo2022LearningTB}, the predictor does not require access to the full problem formulation, so generating covariate data is efficient.

\begin{table}
    \centering
    \caption{The number of training, calibration, and test instances for each problem family, along with the corresponding reference. \change{The OTS-medium family uses $\ncalib = 28$ and $\ntest = 72$ because only 100 instances are available in total from the real-world dataset.}}
    \label{tab:problems}
    \small
\begin{tabular}{@{}l l l l l@{}}
\toprule
Problem & $\ntrain$ & $\ncalib$ & $\ntest$ & Reference \\
\midrule
CFLP-medium & 800 & 100 & 100 & \citeauthor{Scavuzzo2022LearningTB} \\
GISP-easy & 800 & 100 & 100 & \citeauthor{hochbaum1997forest} \\
OTS-medium & 800 & 28 & 72 & \citeauthor{Huang2024DistributionalMA} \\
MMCN-medium-BI & 800 & 100 & 100 & \citeauthor{greening2023lead} \\
MIS-medium & 800 & 100 & 100 & \citeauthor{Huang2024DistributionalMA} \\
MVC-medium & 800 & 100 & 100 & \citeauthor{korte2018combinatorial} \\
\bottomrule
\end{tabular}
\end{table}

\paragraph{Optimization solver.}
The solvers we test our method on are \change{Gurobi 12}~\cite{gurobi} and Cardinal Optimization~\cite{Ge2022CardinalO}.
We use solver callbacks to evaluate the predictor $\predgap_\param(t)$ as the solve is running.
We disable presolve on both solvers, use only a single thread, \change{and set Gurobi MIPFocus parameter to 1}. 
The maximum allowed solve time is set to 600 seconds.
In our experiments we seek to solve all problem instances to within $0.1\%$ accuracy, so we set $\epsilon = 0.001$.
We choose $\alpha = 0.05$, so that our solver is likely to return an $\tol$-optimal solution at least $95\%$ of the time.
\paragraph{Evaluation metrics.}
For each problem family, we report average suboptimality on test data $\test = \{\param_i\}_{i=1}^\ntest$,
\begin{equation*}
    \hat{S} = \frac{1}{\ntest}\sum_{i=1}^\ntest \termov(\param_i, \lt),
\end{equation*}
where $\termov(\param_i, \lt)$ is the relative suboptimality of the solution returned by our method on instance $\param_i$, as defined in Section~\ref{sec:expected}.
We also report total solve time and the percent of problems solved to within $\tol$-optimality.
\paragraph{Baselines.}
We compare our method to the baseline methods in Table~\ref{tab:baselines}.
\change{Comparisons to a wider range of baselines are also included in Appendix~\ref{appendix:moreexperiments}.}
\begin{table}[h]
	\centering
	\caption{Baseline methods.}
	\label{tab:baselines}
	\begin{tabular}{lll}
		\toprule
		Method & Solver & Termination criterion \\
		\midrule
		GRB & Gurobi & $\tol$ optimality \\
		GRB1 & Gurobi & 1 solution found \\
		GRB3 & Gurobi & 3 solutions found \\
		COPT & COPT & $\tol$ optimality \\
		COPT1 & COPT & 1 solution found \\
		COPT3 & COPT & 3 solutions found \\
		\bottomrule
	\end{tabular}
\end{table}

\paragraph{Data.}
We run experiments on capacitated facility location problems (CFLP-medium), generalized independent set problems (GISP-easy), maximum independent set problems (MIS-medium), minimum vertex cover problems (MVC-medium), optimal transmission switching problems (OTS-medium), and middle-mile consolidation network design problems (MMCN-medium-BI).
The OTS-medium and MMCN-medium-BI problems come from real-world datasets.
 These problems are costly to solve, and on some families, Gurobi takes hundreds of seconds to solve most instances.
The number of train points $\ntrain$, calibration points $\ncalib$, and test points $\ntest$ vary depending on the problem family.
The details on data and formulations are given in Table~\ref{tab:problems}.

\begin{figure}[t]
    \centering
    \ifarxiv
    \includegraphics[width=0.8\columnwidth]{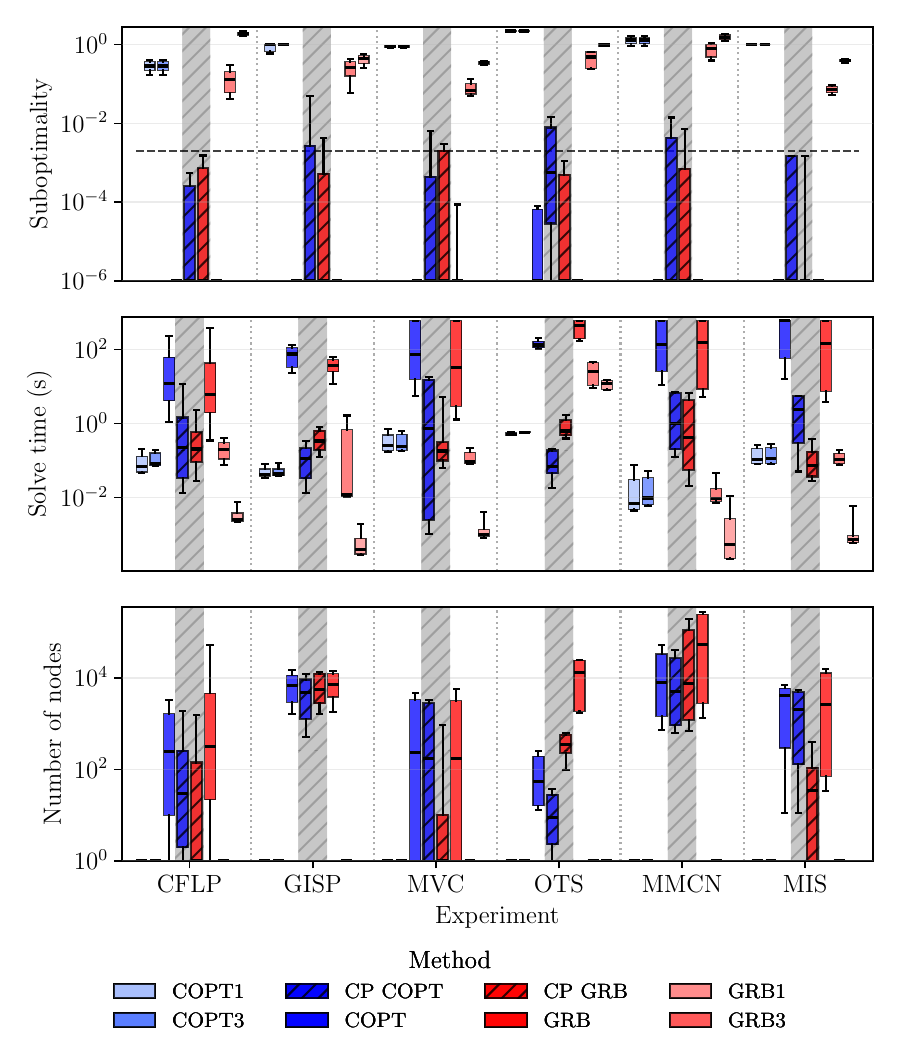}%
    \else
    \includegraphics[width=\columnwidth]{solved_boxplots.pdf}%
    \fi
    \caption{Suboptimality, solve time, and number of nodes explored at termination across all problem families.
        The dashed line in the top panel marks the target suboptimality $\tol = 0.1\%$; values are clipped to $10^{-6}$ when an optimal solution is found.
        Our methods (shaded grey) achieve substantially faster solve times than the global solvers Gurobi (GRB) and COPT while maintaining low suboptimality.}
    \label{fig:results}
\end{figure}

\begin{figure}[t]
    \centering
    \ifarxiv
    \includegraphics[width=0.8\columnwidth]{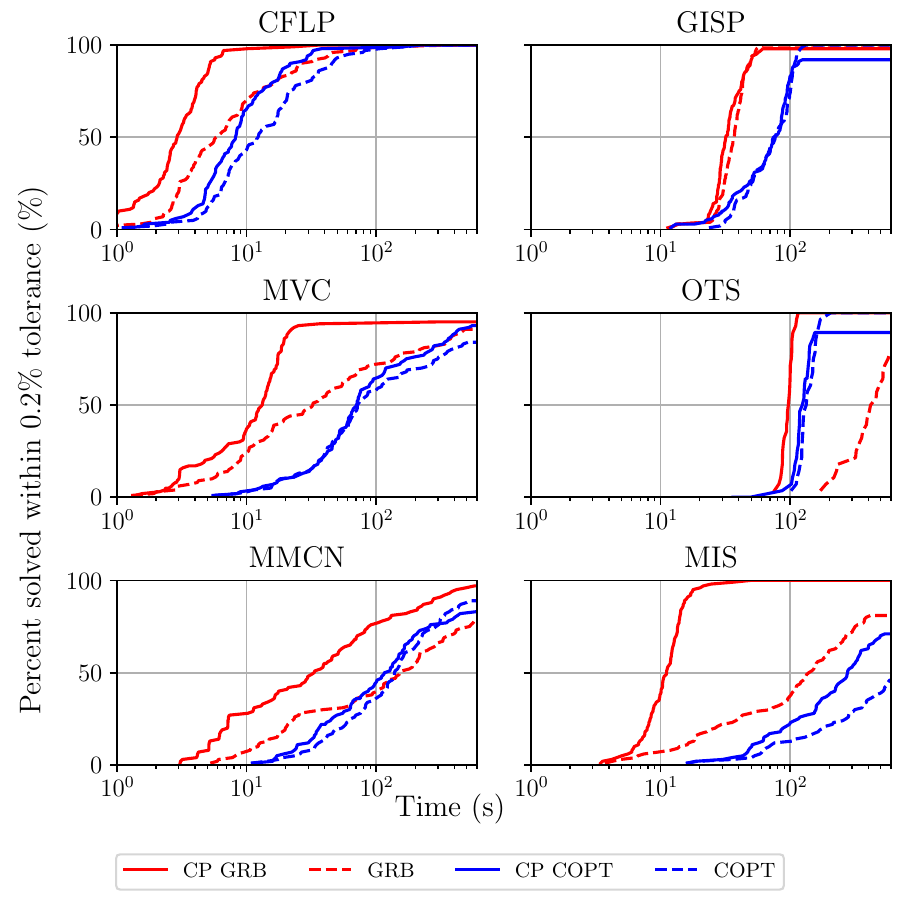}%
    \else
    \includegraphics[width=\columnwidth]{solved_plot.pdf}%
    \fi
    \caption{
        The number of instances solved to within $2\epsilon = 0.2\%$-optimality over time for each method on each dataset.
        Since $\epsilon$ was the conformal target tolerance, we expect almost all problems from each method to be solved to this level of optimality.
        The conformal prediction method solves most problems very quickly, but it is not guaranteed to solve all problems to within $0.2\%$-optimality due to the adjustment made by Theorem~\ref{thm:conformal}, which occurs with probability around $\errorprob$.
    }
    \label{fig:solvetime2}
\end{figure}

\paragraph{Code overview.}
We first solve all problems in the training dataset $\{\param_i\}_{i=1}^{\ntrain}$ using solver callbacks and save the solver state $\covars_t(\param_i)$ at each point $t$ and true optimality gap $\truegap_{\param_i}(t)$ at which the callback is called.
This process can be computationally intensive. 
The next step is to train the predictor $\predgap_\param(t)$ using the saved solver states and true optimality gaps.
We run the Adam optimizer~\cite{Kingma2015Adam} using the PyTorch~\cite{Paszke2019PyTorchAI} automatic differentiation software to minimize the loss~\eqref{eq:loss2} where $\predgap_\param(t) = \activ\big(\nn(\covars_\param(t)) \mid \LB_\param(t), \UB_\param(t)\big)$.
The training process runs for half an hour on a single GPU for each experiment.
We then perform the conformal prediction step described in Section~\ref{sec:conformal} to get $\lt$, and evaluate the quality of the stopping time $\pst_\lt(\param)$ on the test dataset.
Code for our experiments is available through the following link:
\begin{center}
    \href{https://github.com/stellatogrp/conformal_mip}{\nolinkurl{github.com/stellatogrp/conformal_mip}}
\end{center}

\subsection{Experiment results}
Figure~\ref{fig:results} shows suboptimality, solve time, and branch-and-bound nodes at termination for each method.
Full tabular results appear in Appendix~\ref{sec:tables}.
Across all problem families, the conformal prediction method maintains low suboptimality while substantially reducing solve time.
Compared to Gurobi, our method terminates 76\% faster on CFLP, 78\% faster on OTS, 66\% faster on MMCN, 88\% faster on MVC, and 95\% faster on MIS.
The improvement on GISP is more modest at 8\%, reflecting the structure of these instances: both solvers close the optimality gap relatively quickly after finding a good solution, leaving less room for early termination.

Figure~\ref{fig:solvetime2} shows the number of instances solved to $\tol$-optimality over time.
The conformal prediction method solves at least $100(1-\errorprob)\%$ of problems to $\tol$-optimality with high probability, though it may not solve all problems due to the probabilistic guarantee of Theorem~\ref{thm:conformal}.
Within the 600-second time limit, our method solves more instances to $\tol$-optimality than Gurobi alone on several problem families.
The improvement of CP~COPT over COPT is more modest but still clear.
This difference arises because Gurobi's heuristics find feasible solutions quickly, then spend most of the solve time tightening lower bounds to prove optimality.
Our predictor exploits this behavior by estimating when the current solution is already near-optimal, enabling early termination before the lower bound fully closes.





\section{Conclusion}
Branch-and-bound solvers often find optimal solutions long before they can prove optimality.
We exploit this by learning a termination criterion that stops the solver early while providing rigorous probabilistic guarantees on solution quality.
Our method trains a neural network to predict the true optimality gap from solver state, then uses conformal prediction to calibrate a stopping threshold that ensures $\tol$-optimal solutions with probability at least $1 - \errorprob$.
We establish sample-conditional guarantees showing that expected suboptimality and success probability concentrate near their empirical values on calibration data.
On benchmarks from the Distributional MIPLIB library, our method achieves speedups exceeding 60\% on five of \change{six} problem families while maintaining $0.1\%$-optimality with 95\% probability.

\paragraph{Limitations.}
Our method requires solving instances from the target distribution during training to record solver state, which can be computationally expensive.
This means the approach accelerates problems that current solvers can already solve; it does not extend the frontier of tractable instances.
The speedup also varies across problem families: while most benchmarks show improvements exceeding 60\%, the GISP family achieves only 8\% speedup, suggesting that the gap between finding and proving optimality is problem-dependent.
Finally, the conformal guarantee assumes test instances are drawn from the same distribution as calibration data; distribution shift may degrade performance.



\newcommand{\myack}{%
Bartolomeo Stellato is supported by the NSF CAREER Award ECCS-2239771 and the ONR YIP Award N000142512147.
The authors are pleased to acknowledge that the work reported on in this paper was substantially performed using Princeton University's Research Computing resources.%
}

\ifarxiv
\section*{Acknowledgements}
\myack

\bibliographystyle{icml2026}
\bibliography{bibliography}
\else
\balance
\ifdefined\isaccepted
\section*{Acknowledgements}
\myack
\fi

\section*{Impact Statement}
This paper presents work on accelerating combinatorial optimization using probabilistic methods.
This has the potential to significantly reduce computational costs in real-world applications where combinatorial optimization problems are prevalent.

\bibliography{bibliography}
\bibliographystyle{icml2026}
\fi

\appendix
\onecolumn
\change{
\section{Experimental data} \label{appendix:experimentplots}
In this section we display plots and tables to demonstrate that the datasets and convergence curves for the problems considered are nontrivial.
\subsection{Convergence curve analysis}
Here we display information on the convergence curves of the MIPs.
Convergence curves for 20 instances of each test set are displayed in Figure~\ref{fig:convergencecurves}.
The following table numerically summarizes the convergence curve data.
Here UB conv. (s) is the average time taken (mean $\pm$ SD) for a run of Gurobi to find the optimal solution on the training dataset.
LB conv. (s) is the average time taken for a run of Gurobi to reduce the relaxation gap to zero in branch-and-bound.
Let $\tau^\star$ be the time at which the optimal solution is discovered by Gurobi. 
Certified gap at $\tau^\star$ is the relaxation gap defined by the branch-and-bound tree at $\tau^\star$, while Predicted gap at $\tau^\star$ is the gap estimated by our model $g_\param(\tau^\star)$.
Observe that our model reliably predicts that the gap is small at the time when the optimal solution is found.
Note that lines in the plot often start some time after zero. This is because lower-bounds only exist once the first node in the branch-and-bound tree has been evaluated and upper-bounds only exist once a feasible solution has been found.
\begin{table}[ht]
\centering
\caption{Convergence statistics across problem families (mean $\pm$ std).}
\label{tab:convergence_stats}
\begin{tabular}{lcccc}
\toprule
Family & UB conv.\ (s) & LB conv.\ (s) & Certified gap at $\tau^\star$ (\%) & Predicted gap at $\tau^\star$ (\%) \\
\midrule
CFLP  & $6.8 \pm 7.3$    & $15.6 \pm 40.8$   & $0.108 \pm 0.069$ & $0.034 \pm 0.022$ \\
GISP  & $19.0 \pm 13.0$  & $37.0 \pm 8.9$    & $20.3 \pm 10.3$   & $0.845 \pm 1.791$ \\
MIS   & $10.8 \pm 8.8$   & $221.0 \pm 209.4$ & $1.85 \pm 0.68$   & $0.014 \pm 0.037$ \\
MMCN  & $61.4 \pm 110.7$ & $245.2 \pm 230.7$ & $2.67 \pm 1.88$   & $0.149 \pm 0.177$ \\
MVC   & $9.7 \pm 7.8$    & $126.7 \pm 188.1$ & $0.70 \pm 0.18$   & $0.009 \pm 0.020$ \\
OTS   & $84.2 \pm 18.4$  & $431.8 \pm 139.7$ & $4.09 \pm 1.66$   & $0.018 \pm 0.071$ \\
\bottomrule
\end{tabular}
\end{table}

\begin{figure}[ht]
    \centering
    \begin{subfigure}{0.48\textwidth}
        \includegraphics[width=\linewidth]{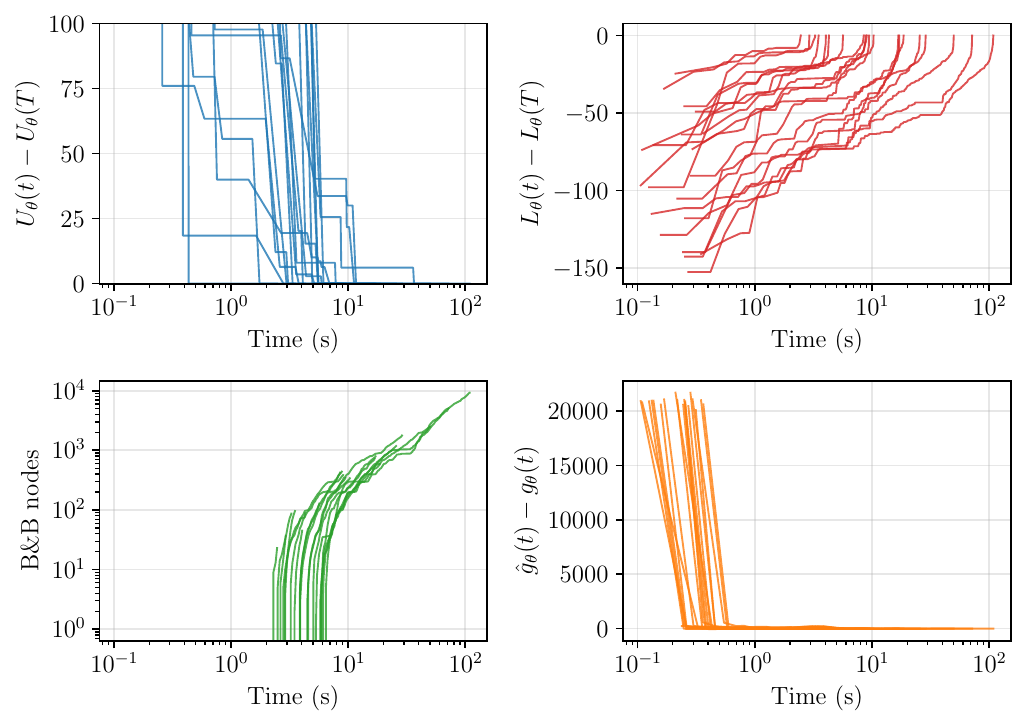}
        \caption{CFLP-medium}
        \label{fig:cflpinfo}
    \end{subfigure}\hfill
    \begin{subfigure}{0.48\textwidth}
        \includegraphics[width=\linewidth]{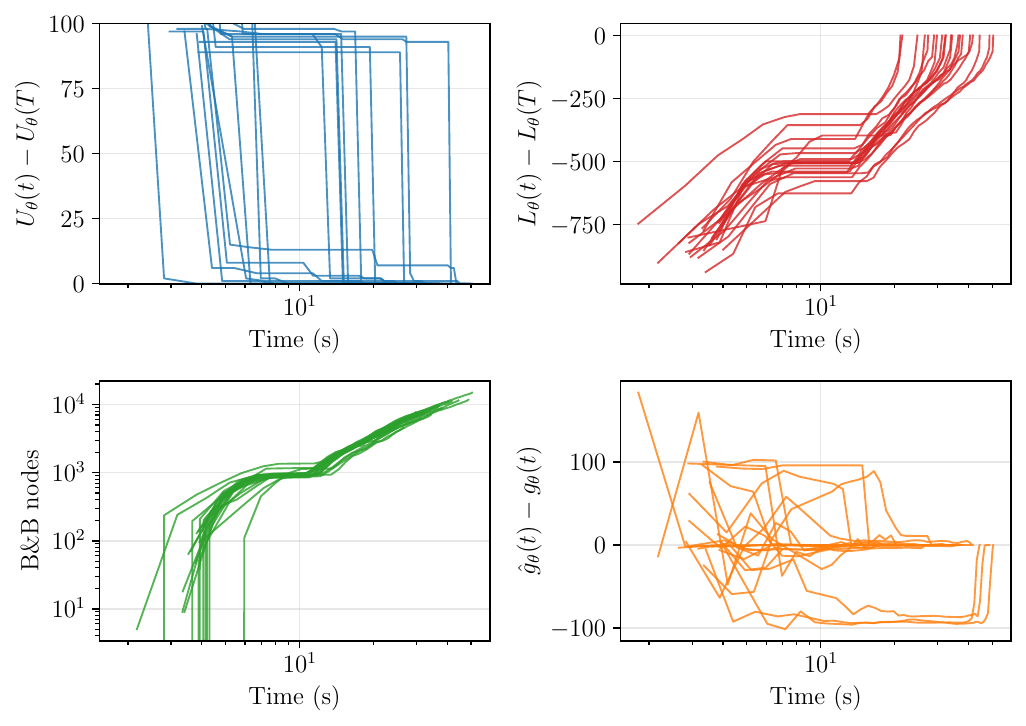}
        \caption{GISP-easy}
        \label{fig:gispinfo}
    \end{subfigure}

    \vspace{1em}
    \begin{subfigure}{0.48\textwidth}
        \includegraphics[width=\linewidth]{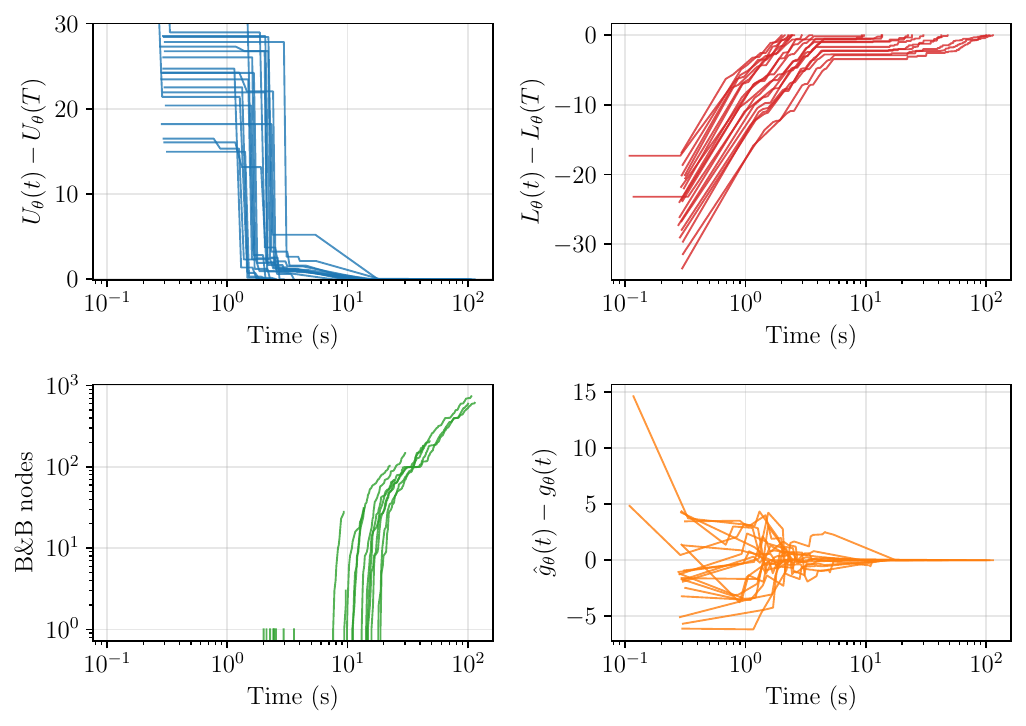}
        \caption{MVC-medium}
        \label{fig:mvcinfo}
    \end{subfigure}\hfill
    \begin{subfigure}{0.48\textwidth}
        \includegraphics[width=\linewidth]{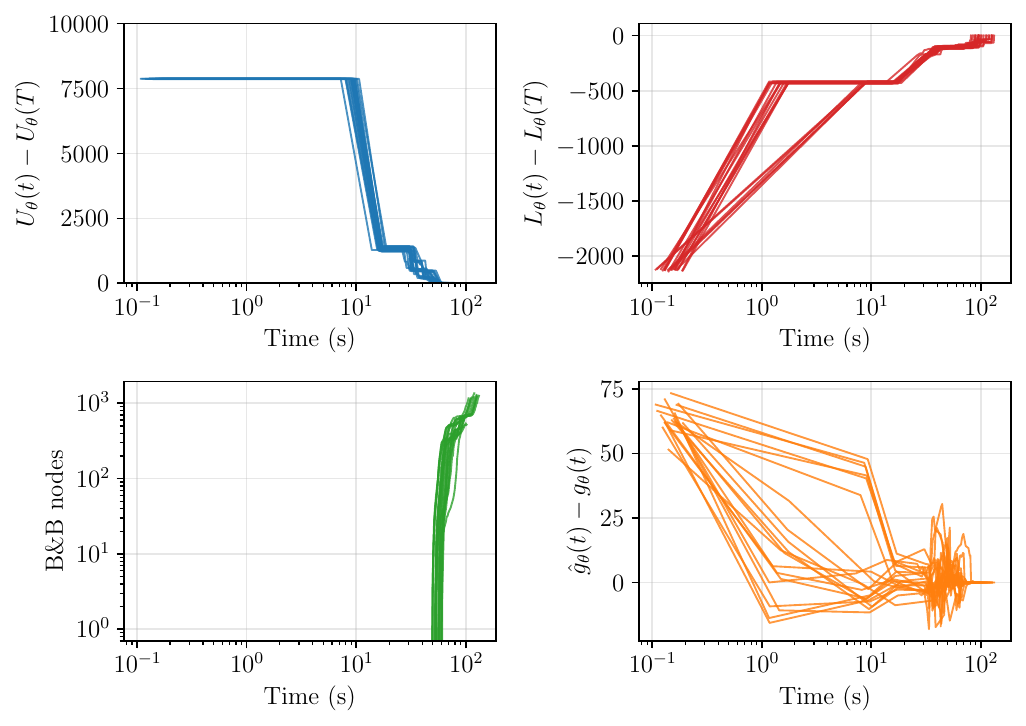}
        \caption{OTS-medium}
        \label{fig:otsinfo}
    \end{subfigure}

    \vspace{1em}
    \begin{subfigure}{0.48\textwidth}
        \includegraphics[width=\linewidth]{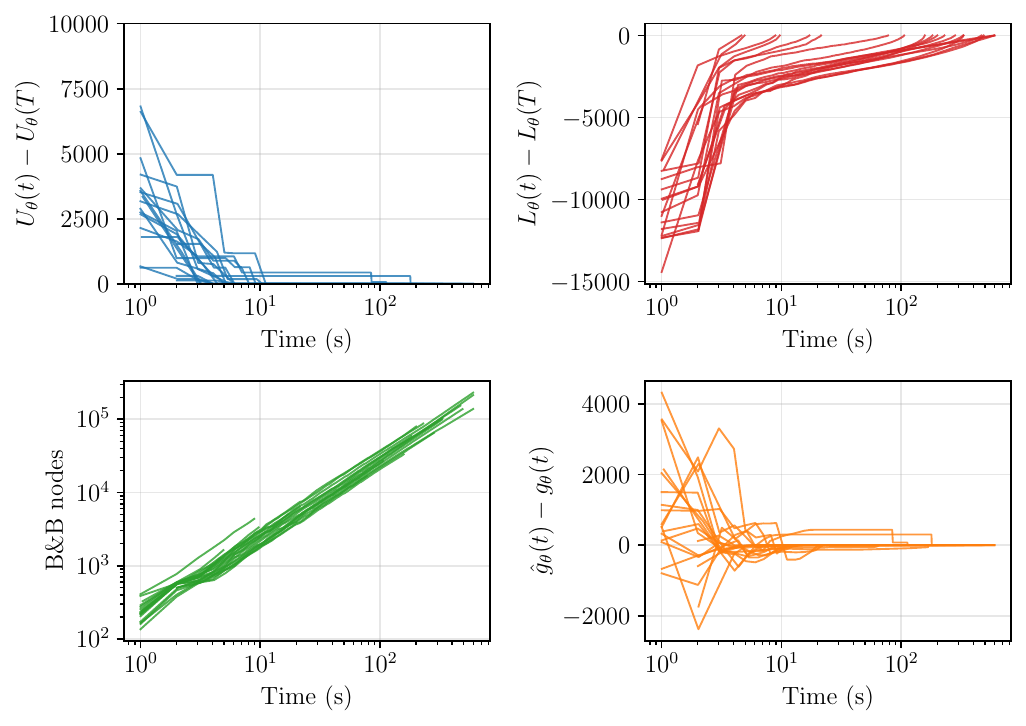}
        \caption{MMCN-medium-BI}
        \label{fig:mmcninfo}
    \end{subfigure}\hfill
    \begin{subfigure}{0.48\textwidth}
        \includegraphics[width=\linewidth]{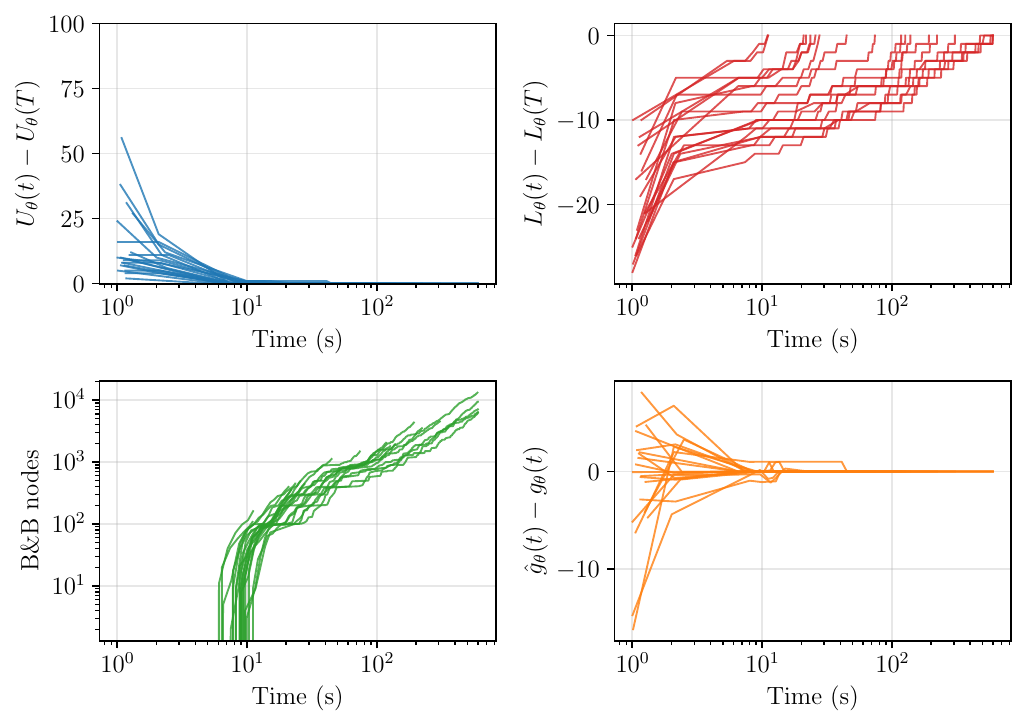}
        \caption{MIS-medium}
        \label{fig:misinfo}
    \end{subfigure}
    \caption{Convergence curves for 20 test instances from each problem family.}
    \label{fig:convergencecurves}
\end{figure}
\subsection{SCIP Heuristics}
We attempt to solve each MIP in each problem test-set with three SCIP~\cite{Hojny2025TheSO} heuristics: Feasibility Pump, RENS, and Simple Rounding.
We choose the SCIP solver for this task because, unlike the other solvers considered in this paper, it allows the user to enable and disable individual heuristics easily.
Results are displayed in Table~\ref{tab:scip_heuristics}.
Individual heuristics do not suffice to reliably solve problems in our dataset.
\begin{table}[h]
\small
\centering
\caption{SCIP heuristic results across problem families.}
\label{tab:scip_heuristics}
\begin{tabular}{llcccc}
\toprule
Family & Heuristic & Time (s) & Success (\%) & Rel.\ Subopt.\ (\%) & $\leq$5\%-optimal (\%) \\
\midrule
\multirow{3}{*}{CFLP-medium}    & FP & 7.4   & 100 & 2.64   & 93  \\
                                & RENS             & 3.8   & 100 & 0.09   & 100 \\
                                & SR  & 7.6   & 100 & 12.20  & 3   \\
\midrule
\multirow{3}{*}{GISP-easy}      & FP & 1.0   & 100 & 76.23  & 0   \\
                                & RENS             & 2.2   & 100 & 92.44  & 0   \\
                                & SR  & 1.0   & 100 & 94.92  & 0   \\
\midrule
\multirow{3}{*}{MIS-medium}     & FP & 11.0  & 100 & 122.91 & 0   \\
                                & RENS             & 10.6  & 89  & 100.01 & 0   \\
                                & SR  & 10.7  & 100 & 100.27 & 0   \\
\midrule
\multirow{3}{*}{MMCN-medium-BI} & FP & 2.1   & 0   & ---    & --- \\
                                & RENS             & 2.5   & 100 & 3.67   & 80  \\
                                & SR  & 2.1   & 2   & 135.63 & 0   \\
\midrule
\multirow{3}{*}{MVC-medium}     & FP & 19.1  & 10  & 37.38  & 0   \\
                                & RENS             & 18.5  & 10  & 89.72  & 0   \\
                                & SR  & 19.3  & 10  & 88.02  & 0   \\
\midrule
\multirow{3}{*}{OTS-medium}     & FP & 188.9 & 36  & 50.36  & 0   \\
                                & RENS             & 200.7 & 0   & ---    & --- \\
                                & SR  & 208.5 & 0   & ---    & --- \\
\bottomrule
\end{tabular}
\end{table}
}

\section{Tabular results} \label{sec:tables}
Time (s) refers to time until termination.
Suboptimality refers to the relative suboptimality of the returned solution.
Nodes refers to the number of nodes explored in the branch-and-bound tree.
Correct refers to the percentage of instances for which the method found an $\tol$-optimal solution within the time limit. If the 600\,s time limit is exceeded, the solver is allowed to return its current best solution.
Speedup (s) refers to the improvement in solve time compared to the baseline method --- either CP or GRB depending on the method.
Time, Suboptimality, and Nodes are reported as mean $\pm$ standard deviation.
\ifarxiv
\newpage
\fi
\vspace{1em}

\begin{table}[H]
\small
\centering
\caption{Capacitated facility location (CFLP-medium).}
\label{tab:results-cflp}
\begin{tabular}{llllll}
\toprule
Method & Time (s) & Suboptimality & Nodes & Correct & Speedup (s)\\
\midrule
\begin{filecontents*}{CFLPaggregated_results_mean_pm_sd.csv}
Method,Time(s),Subopt,RelSubopt,NNodes,PCorrect,TimeImproved(s)
CP GRB,3.7 ± 4.2,5.0 ± 9.2,1.5e-02\% ± 2.8e-04,47.9 ± 169.1,98.0\%,11.9
GRB,15.6 ± 40.8,0 ± 0,0.0\% ± 0,1453.5 ± 5326.4,100.0\%,0
GRB1,2.8e-03 ± 1.0e-03,60666.8 ± 2675.6,187.7\% ± 9.6e-02,0 ± 0,0.0\%,15.6
GRB3,2.0e-01 ± 6.2e-02,4199.8 ± 1454.6,13.0\% ± 4.5e-02,1.0 ± 0,0.0\%,15.4
CP COPT,13.8 ± 25.2,1.5 ± 3.6,4.5e-03\% ± 1.1e-04,94.0 ± 258.8,100.0\%,8.2
COPT,22.0 ± 31.1,0 ± 0,0.0\% ± 0,418.4 ± 567.5,100.0\%,0
COPT1,8.1e-02 ± 3.0e-02,9310.1 ± 1471.5,28.8\% ± 4.7e-02,1.0 ± 0,0.0\%,22.0
COPT3,9.6e-02 ± 2.9e-02,9310.1 ± 1471.5,28.8\% ± 4.7e-02,1.0 ± 0,0.0\%,21.9
\end{filecontents*}
\csvreader[
  head to column names=false,
  late after line=\\
]{CFLPaggregated_results_mean_pm_sd.csv}{}%
{\csvcoli & \csvcolii & \csvcoliv & \csvcolv & \csvcolvi & \csvcolvii}
\bottomrule
\end{tabular}
\end{table}

\begin{table}[H]
\small
\centering
\caption{Generalized independent set (GISP-easy).}
\label{tab:results-gisp}
\begin{tabular}{llllll}
\toprule
Method & Time (s) & Suboptimality & Nodes & Correct & Speedup (s)\\
\midrule
\begin{filecontents*}{GISPaggregated_results_mean_pm_sd.csv}
Method,Time(s),Subopt,RelSubopt,NNodes,PCorrect,TimeImproved(s)
CP GRB,34.0 ± 9.4,2.0e-01 ± 9.6e-01,1.1e-02\% ± 5.1e-04,6265.4 ± 2755.0,96.0\%,3.0
GRB,37.0 ± 8.9,0 ± 0,0.0\% ± 0,7332.2 ± 2584.2,100.0\%,0
GRB1,4.5e-04 ± 2.1e-04,844.1 ± 129.2,44.3\% ± 6.6e-02,0 ± 0,0.0\%,37.0
GRB3,1.4e-01 ± 2.9e-01,520.5 ± 129.3,27.3\% ± 6.6e-02,1.0 ± 0,0.0\%,36.9
CP COPT,67.0 ± 28.4,3.3 ± 16.0,1.7e-01\% ± 8.1e-03,4771.9 ± 2456.8,85.0\%,8.6
COPT,75.5 ± 27.7,0 ± 0,0.0\% ± 0,6672.9 ± 2739.7,100.0\%,0
COPT1,4.5e-02 ± 8.0e-03,1838.2 ± 194.6,96.5\% ± 9.9e-02,1.0 ± 0,0.0\%,75.5
COPT3,4.5e-02 ± 7.5e-03,1906.2 ± 66.2,100.0\% ± 0,1.0 ± 0,0.0\%,75.5
\end{filecontents*}
\csvreader[
  head to column names=false,
  late after line=\\
]{GISPaggregated_results_mean_pm_sd.csv}{}%
{\csvcoli & \csvcolii & \csvcoliv & \csvcolv & \csvcolvi & \csvcolvii}
\bottomrule
\end{tabular}
\end{table}

\ifarxiv
\newpage
\fi
\begin{table}[H]
\small
\centering
\caption{Optimal transmission switching (OTS-medium).}
\label{tab:results-ots}
\begin{tabular}{llllll}
\toprule
Method & Time (s) & Suboptimality & Nodes & Correct & Speedup (s)\\
\midrule
\begin{filecontents*}{OTSaggregated_results_mean_pm_sd.csv}
Method,Time(s),Subopt,RelSubopt,NNodes,PCorrect,TimeImproved(s)
CP GRB,96.2 ± 9.2,1.6e-01 ± 4.9e-01,7.4e-03\% ± 2.3e-04,368.3 ± 108.9,96.4\%,335.6
GRB,431.8 ± 139.7,6.6e-05 ± 3.4e-04,3.1e-06\% ± 1.6e-07,13254.9 ± 6916.7,100.0\%,0
GRB1,11.5 ± 1.8,2078.0 ± 80.9,97.7\% ± 3.7e-02,1.0 ± 0,0.0\%,420.3
GRB3,26.2 ± 14.2,1034.4 ± 286.6,48.6\% ± 1.3e-01,1.0 ± 0,0.0\%,405.6
CP COPT,117.8 ± 25.8,3.1 ± 6.8,1.5e-01\% ± 3.2e-03,12.0 ± 8.9,78.6\%,20.7
COPT,138.5 ± 22.0,2.7e-02 ± 5.1e-02,1.3e-03\% ± 2.4e-05,67.4 ± 56.6,100.0\%,0
COPT1,5.2e-01 ± 3.0e-02,4711.5 ± 16.3,221.5\% ± 9.1e-03,1.0 ± 0,0.0\%,138.0
COPT3,5.8e-01 ± 1.4e-02,4711.5 ± 16.3,221.5\% ± 9.1e-03,1.0 ± 0,0.0\%,137.9
\end{filecontents*}
\csvreader[
  head to column names=false,
  late after line=\\
]{OTSaggregated_results_mean_pm_sd.csv}{}%
{\csvcoli & \csvcolii  & \csvcoliv & \csvcolv & \csvcolvi & \csvcolvii}
\bottomrule
\end{tabular}
\end{table}

\ifarxiv
\else
\newpage
\fi
\begin{table}[H]
\small
\centering
\caption{Middle-mile consolidation network design (MMCN-medium-BI).}
\label{tab:results-mmcn}
\begin{tabular}{llllll}
\toprule
Method & Time (s) & Suboptimality & Nodes & Correct & Speedup (s)\\
\midrule
\begin{filecontents*}{MMCNaggregated_results_mean_pm_sd.csv}
Method,Time(s),Subopt,RelSubopt,NNodes,PCorrect,TimeImproved(s)
CP GRB,83.9 ± 124.9,19.3 ± 100.5,1.8e-02\% ± 9.5e-04,26635.8 ± 40107.9,96.0\%,161.3
GRB,245.2 ± 230.7,0 ± 0,0.0\% ± 0,88856.8 ± 86889.5,100.0\%,0
GRB1,9.4e-04 ± 1.4e-03,165620.1 ± 12970.9,153.3\% ± 1.2e-01,0 ± 0,0.0\%,245.2
GRB3,1.0e-02 ± 5.1e-03,83627.0 ± 21037.7,77.3\% ± 1.9e-01,1.0 ± 0,0.0\%,245.2
CP COPT,148.6 ± 154.8,79.9 ± 200.0,7.6e-02\% ± 1.9e-03,7939.3 ± 8294.9,80.0\%,55.4
COPT,204.0 ± 179.6,0 ± 0,0.0\% ± 0,11426.8 ± 11073.7,100.0\%,0
COPT1,1.2e-02 ± 1.2e-02,140779.1 ± 14969.3,130.5\% ± 1.5e-01,1.0 ± 0,0.0\%,204.0
COPT3,1.4e-02 ± 9.4e-03,140779.1 ± 14969.3,130.5\% ± 1.5e-01,1.0 ± 0,0.0\%,204.0
\end{filecontents*}
\csvreader[
  head to column names=false,
  late after line=\\
]{MMCNaggregated_results_mean_pm_sd.csv}{}%
{\csvcoli & \csvcolii & \csvcoliv & \csvcolv & \csvcolvi & \csvcolvii}
\bottomrule
\end{tabular}
\end{table}

\begin{table}[H]
\small
\centering
\caption{Maximum independent set (MIS-medium).}
\label{tab:results-mis}
\begin{tabular}{llllll}
\toprule
Method & Time (s) & Suboptimality & Nodes & Correct & Speedup (s)\\
\midrule
\begin{filecontents*}{MISaggregated_results_mean_pm_sd.csv}
Method,Time(s),Subopt,RelSubopt,NNodes,PCorrect,TimeImproved(s)
CP GRB,11.7 ± 5.9,3.0e-02 ± 1.7e-01,4.4e-03\% ± 2.5e-04,46.7 ± 54.3,97.0\%,209.2
GRB,221.0 ± 209.4,0 ± 0,0.0\% ± 0,3994.9 ± 3799.9,100.0\%,0
GRB1,7.9e-04 ± 5.1e-04,272.6 ± 13.9,39.8\% ± 1.9e-02,0 ± 0,0.0\%,221.0
GRB3,1.1e-01 ± 2.4e-02,50.0 ± 5.9,7.3\% ± 8.7e-03,1.0 ± 0,0.0\%,220.9
CP COPT,319.6 ± 212.8,9.0e-02 ± 2.9e-01,1.3e-02\% ± 4.2e-04,2323.3 ± 1638.4,91.0\%,123.8
COPT,443.4 ± 208.4,0 ± 0,0.0\% ± 0,3542.8 ± 1752.2,100.0\%,0
COPT1,1.2e-01 ± 4.5e-02,684.4 ± 6.9,100.0\% ± 0,1.0 ± 0,0.0\%,443.3
COPT3,1.3e-01 ± 4.5e-02,684.4 ± 6.9,100.0\% ± 0,1.0 ± 0,0.0\%,443.3
\end{filecontents*}
\csvreader[
  head to column names=false,
  late after line=\\
]{MISaggregated_results_mean_pm_sd.csv}{}%
{\csvcoli & \csvcolii & \csvcoliv & \csvcolv & \csvcolvi & \csvcolvii}
\bottomrule
\end{tabular}
\end{table}

\begin{table}[H]
\small
\centering
\caption{Minimum vertex cover (MVC-medium).}
\label{tab:results-mvc}
\begin{tabular}{llllll}
\toprule
Method & Time (s) & Suboptimality & Nodes & Correct & Speedup (s)\\
\midrule
\begin{filecontents*}{MVCaggregated_results_mean_pm_sd.csv}
Method,Time(s),Subopt,RelSubopt,NNodes,PCorrect,TimeImproved(s)
CP GRB,14.3 ± 28.8,1.7e-01 ± 3.5e-01,3.2e-02\% ± 6.7e-04,12.2 ± 93.6,88.0\%,112.4
GRB,126.7 ± 188.1,4.4e-04 ± 4.4e-03,8.7e-05\% ± 8.6e-06,727.7 ± 1092.4,100.0\%,0
GRB1,1.1e-03 ± 3.3e-04,183.8 ± 8.7,34.9\% ± 1.9e-02,0 ± 0,0.0\%,126.7
GRB3,1.0e-01 ± 2.5e-02,38.8 ± 8.7,7.4\% ± 1.6e-02,1.0 ± 0,0.0\%,126.6
CP COPT,145.2 ± 166.4,6.1e-02 ± 3.6e-01,1.1e-02\% ± 6.7e-04,602.2 ± 888.7,98.0\%,44.7
COPT,190.0 ± 209.7,0 ± 0,0.0\% ± 0,869.9 ± 1163.1,100.0\%,0
COPT1,3.0e-01 ± 1.1e-01,471.8 ± 6.6,89.5\% ± 1.9e-02,1.0 ± 0,0.0\%,189.7
COPT3,3.0e-01 ± 1.2e-01,471.8 ± 6.6,89.5\% ± 1.9e-02,1.0 ± 0,0.0\%,189.7
\end{filecontents*}
\csvreader[
  head to column names=false,
  late after line=\\
]{MVCaggregated_results_mean_pm_sd.csv}{}%
{\csvcoli & \csvcolii & \csvcoliv & \csvcolv & \csvcolvi & \csvcolvii}
\bottomrule
\end{tabular}
\end{table}

\change{
\section{Further simple baseline experiments} \label{appendix:moreexperiments}
To further demonstrate that our method learns nontrivial patterns in the convergence curves, we provide comparisons between our methods and some simple baseline models.
We consider two further simple models.
In the first (linear) the LSTM is replaced by a generalized linear model which is trained to predict $\log(\predgap_\param(t))$ from $\covars_\param(t)$, and the conformal prediction step is performed as normal.
In the second (CP UB rate) we replace the LSTM predictor with a heuristic which outputs the change in upper bound over the last 5 seconds. The conformal prediction is performed as normal after this.
We further compare our method to Gurobi with a larger range of tolerances, given by 1\%, 2\%, 5\%, and 10\% (0.1\% is given in the tables in Appendix~\ref{sec:tables}).
\subsection{Heuristic models}
Tabular results for the heuristic models are displayed in Table~\ref{tab:heur_subopt} (relative suboptimality) and Table~\ref{tab:heur_time} (solve time).
\begin{table}[h]
\small
\centering
\caption{Suboptimality (\%) by family for LSTM (ours) and the described heuristic models (mean $\pm$ std).}
\label{tab:heur_subopt}
\begin{tabular}{lcccc}
\toprule
Family & LSTM (ours) & Linear & CP UB rate & Full solve \\
\midrule
\begin{filecontents*}{family_heuristic_subopt_table.csv}
Family,LSTM (ours),Linear,CP UB rate,Full solve
CFLP,0.018 ± 0.037,0.003 ± 0.007,0.013 ± 0.046,0.003 ± 0.007
GISP,0.002 ± 0.012,0.000 ± 0.000,0.000 ± 0.000,0.000 ± 0.000
MIS,0.006 ± 0.029,0.000 ± 0.000,0.000 ± 0.000,0.000 ± 0.000
MMCN,0.032 ± 0.111,0.000 ± 0.000,0.000 ± 0.000,0.000 ± 0.000
MVC,0.047 ± 0.076,0.002 ± 0.010,0.017 ± 0.063,0.002 ± 0.010
OTS,0.001 ± 0.002,0.076 ± 0.207,0.076 ± 0.207,0.000 ± 0.000
\end{filecontents*}
\csvreader[
  no head,
  late after line=\\,
  filter ifthen={\value{csvinputline}>1}
]{family_heuristic_subopt_table.csv}{}%
{\csvcoli & \csvcolii & \csvcoliii & \csvcoliv & \csvcolv}
\bottomrule
\end{tabular}
\end{table}
\begin{table}[h]
\small
\centering
\caption{Solve time by family for LSTM (ours) and the described heuristic models (mean $\pm$ std).}
\label{tab:heur_time}
\begin{tabular}{lcccc}
\toprule
Family & LSTM (ours) & Linear & CP UB rate & Full solve \\
\midrule
\begin{filecontents*}{family_heuristic_time_table.csv}
Family,LSTM (ours),Linear,CP UB rate,Full solve
CFLP,3.7 ± 4.2,7.338 ± 22.543,2.949 ± 1.655,15.6 ± 40.8
GISP,34.0 ± 9.4,31.602 ± 7.404,31.581 ± 7.406,37.0 ± 8.9
MIS,11.7 ± 5.9,207.298 ± 207.103,208.000 ± 207.179,221.0 ± 209.4
MMCN,83.9 ± 124.9,237.573 ± 228.088,237.590 ± 228.117,245.2 ± 230.7
MVC,14.3 ± 28.8,116.111 ± 179.861,10.042 ± 4.495,126.7 ± 188.1
OTS,96.2 ± 9.2,245.919 ± 175.426,246.270 ± 175.545,431.8 ± 139.7
\end{filecontents*}
\csvreader[
  no head,
  late after line=\\,
  filter ifthen={\value{csvinputline}>1}
]{family_heuristic_time_table.csv}{}%
{\csvcoli & \csvcolii & \csvcoliii & \csvcoliv & \csvcolv}
\bottomrule
\end{tabular}
\end{table}
\subsection{Additional solver tolerances}
Results for a wider range of Gurobi solver tolerances (MIPGap) are displayed in Table~\ref{tab:family_subopt} (relative suboptimality) and Table~\ref{tab:family_time} (solve time).
Tolerances of 1\% and 2\% usually terminate after the optimal solution has been found, but on some problems they take significantly longer than our method and provide no statistical guarantees on optimality.
\begin{table}[h]
\footnotesize
\centering
\caption{Suboptimality (\%) by family for LSTM (ours) and Gurobi at various tolerances (mean $\pm$ std).}
\label{tab:family_subopt}
\begin{tabular}{lccccc}
\toprule
Family & LSTM (ours) & 1\% & 2\% & 5\% & 10\% \\
\midrule
\begin{filecontents*}{family_subopt_table.csv}
Family,LSTM (ours),1%,2%,5%,10%
CFLP,1.5e-02 ± 2.8e-04,0.408 ± 0.277,0.918 ± 0.606,2.069 ± 1.266,6.427 ± 3.634
GISP,1.1e-02 ± 5.1e-04,0.000 ± 0.000,0.000 ± 0.000,0.046 ± 0.458,0.102 ± 0.645
MIS,4.4e-03 ± 2.5e-04,0.014 ± 0.066,0.178 ± 0.264,1.436 ± 0.521,3.288 ± 1.510
MMCN,1.8e-02 ± 9.5e-04,0.003 ± 0.018,0.029 ± 0.094,0.608 ± 0.555,1.544 ± 0.945
MVC,3.2e-02 ± 6.7e-04,0.167 ± 0.139,0.484 ± 0.214,1.997 ± 0.986,5.362 ± 1.018
OTS,7.4e-03 ± 2.3e-04,0.000 ± 0.002,0.000 ± 0.002,0.075 ± 0.110,2.634 ± 1.968
\end{filecontents*}
\csvreader[
  no head,
  late after line=\\,
  filter ifthen={\value{csvinputline}>1}
]{family_subopt_table.csv}{}%
{\csvcoli & \csvcolii & \csvcoliii & \csvcoliv & \csvcolv & \csvcolvi}
\bottomrule
\end{tabular}
\end{table}
\begin{table}[h]
\footnotesize
\centering
\caption{Solve time by family for LSTM (ours) and Gurobi at various tolerances (mean $\pm$ std).}
\label{tab:family_time}
\begin{tabular}{lccccc}
\toprule
Family & LSTM (ours) & 1\% & 2\% & 5\% & 10\% \\
\midrule
\begin{filecontents*}{family_time_table.csv}
Family,LSTM (ours),1%,2%,5%,10%
CFLP,3.7 ± 4.2,1.724 ± 1.611,0.732 ± 0.584,0.413 ± 0.135,0.354 ± 0.135
GISP,34.0 ± 9.4,44.933 ± 13.197,44.182 ± 12.505,44.072 ± 13.291,39.734 ± 12.164
MIS,11.7 ± 5.9,144.374 ± 168.723,33.242 ± 68.120,1.606 ± 0.796,1.067 ± 0.290
MMCN,83.9 ± 124.9,213.320 ± 218.386,105.692 ± 176.299,6.854 ± 17.762,2.149 ± 1.048
MVC,14.3 ± 28.8,33.798 ± 94.832,6.190 ± 20.474,2.609 ± 1.002,0.445 ± 0.533
OTS,96.2 ± 9.2,74.212 ± 8.474,73.323 ± 7.718,54.546 ± 8.845,41.360 ± 5.283
\end{filecontents*}
\csvreader[
  no head,
  late after line=\\,
  filter ifthen={\value{csvinputline}>1}
]{family_time_table.csv}{}%
{\csvcoli & \csvcolii & \csvcoliii & \csvcoliv & \csvcolv & \csvcolvi}
\bottomrule
\end{tabular}
\end{table}
}

\change{
\section{Sensitivity analyses}
In this section we perform two additional experiments.
In the first we consider perturbations of training data to investigate how quickly performance degrades out of distribution.
In the second we investigate the effect of changing the size of the calibration set on coverage rate.
We only consider the parametric families CFLP, GISP, MIS and MVC in this section because the experiments require repeated resampling of iid datapoints.
This is not possible on the datasets which come from real data.
\subsection{Sensitivity to distribution shifts}
We used distributional MIPLIB generators for four families to investigate the effect of distribution shift.
On each problem which comes with a generator (CFLP, GISP, MIS and MVC), we generate a new train and calibration dataset, of size 1000 and 100 respectively, and generate 100 separate iid test sets of size 100.
In each experiment we perturb all random continuous parameters in the generator of the test dataset by multiplying by a uniform random variable $U[1, 1+p]$, where $p$ controls the perturbation magnitude.
We perform the conformal procedure as described in the main text.
Results are displayed in Table~\ref{tab:perturbation_subopt}.
We observe no systematic degradation in suboptimality, consistent with the TV-distance guarantee.

\begin{table}[h]
\small
\centering
\caption{Suboptimality (\%) under distribution shift (mean $\pm$ std).}
\label{tab:perturbation_subopt}
\begin{tabular}{lcccc}
\toprule
Family & $p=0$ & $p=0.01$ & $p=0.05$ & $p=0.2$ \\
\midrule
CFLP & $0.020 \pm 0.059$ & $0.023 \pm 0.046$ & $0.009 \pm 0.027$ & $0.010 \pm 0.031$ \\
GISP & $0.055 \pm 0.462$ & $0.135 \pm 0.749$ & $0.091 \pm 0.574$ & $0.075 \pm 0.327$ \\
MIS  & $0.007 \pm 0.032$ & $0.006 \pm 0.026$ & $0.012 \pm 0.064$ & $0.004 \pm 0.014$ \\
MVC  & $0.031 \pm 0.071$ & $0.016 \pm 0.043$ & $0.013 \pm 0.033$ & $0.024 \pm 0.051$ \\
\bottomrule
\end{tabular}
\end{table}

\subsection{Sensitivity to calibration set size}
On each problem which comes with a generator (CFLP, GISP, MIS and MVC), we generate a new train and test dataset, of size 1000 and 100 respectively, and generate 100 separate iid calibration sets of size 100.
The conformal procedure is carried out as described in the main paper, and results are reported on the test dataset.
Results are displayed in Table~\ref{tab:calib_size}.
Note that because we have regenerated the problems ourselves, to allow repeated resampling of the data, the solve-times do not always line up with the solve-times of problems from the Distributional MIPLIB dataset. 
For some problems, we were unable to figure out exactly which parameters were used to generate the data.
However, the results tell the expected story. Variance of coverage is very high when small calibration sets are used, and it decreases as the size of the calibration set increases.
Solve time increases on average as the calibration set size increases, but coverage is more reliable.
\begin{table}[ht]
\centering
\caption{Effect of the calibration set size $c$ on coverage and solve time.}
\label{tab:calib_size}
\ifarxiv
\begin{subtable}{\textwidth}
\else
\begin{subtable}{0.48\textwidth}
\fi
\centering
\small
\setlength{\tabcolsep}{4pt}
\caption{Coverage rate (\%).}
\label{tab:calib_coverage}
\begin{tabular}{lcccc}
\toprule
Family & $c=5$ & $c=10$ & $c=20$ & $c=100$ \\
\midrule
CFLP & $80.0 \pm 16.4$ & $87.9 \pm 9.1$  & $88.8 \pm 8.6$ & $92.7 \pm 2.7$ \\
GISP & $75.1 \pm 18.2$ & $80.5 \pm 12.0$ & $89.7 \pm 7.4$ & $92.8 \pm 3.3$ \\
MIS  & $77.5 \pm 12.5$ & $86.8 \pm 5.6$  & $91.4 \pm 5.5$ & $95.3 \pm 3.0$ \\
MVC  & $77.7 \pm 15.4$ & $85.3 \pm 11.3$ & $90.5 \pm 7.5$ & $96.0 \pm 1.5$ \\
\bottomrule
\end{tabular}
\end{subtable}\ifarxiv\\\medskip\else\hfill\fi
\ifarxiv
\begin{subtable}{\textwidth}
\else
\begin{subtable}{0.48\textwidth}
\fi
\centering
\small
\setlength{\tabcolsep}{4pt}
\caption{Mean solve time (s).}
\label{tab:calib_time}
\begin{tabular}{lcccc}
\toprule
Family & $c=5$ & $c=10$ & $c=20$ & $c=100$ \\
\midrule
CFLP & $9.4 \pm 4.6$  & $11.3 \pm 3.2$ & $11.4 \pm 3.0$  & $13.1 \pm 1.6$ \\
GISP & $20.7 \pm 7.0$ & $22.0 \pm 5.6$ & $26.8 \pm 3.9$  & $28.1 \pm 2.0$ \\
MIS  & $8.7 \pm 1.1$  & $10.1 \pm 2.3$ & $15.1 \pm 13.1$ & $14.7 \pm 4.3$ \\
MVC  & $8.8 \pm 0.9$  & $9.3 \pm 0.6$  & $9.6 \pm 0.3$   & $9.7 \pm 0.0$  \\
\bottomrule
\end{tabular}
\end{subtable}
\end{table}
}




\end{document}
